\newcommand{\De}{\Delta}
\newcommand{\Ga}{\Gamma}
\newcommand{\la}{\lambda}
\newcommand{\FF}{\mathbb{F}}
\newcommand{\QQ}{\mathbb{Q}}
\newcommand{\ZZ}{\mathbb{Z}}
\newcommand{\Gal}{\mathop{\rm Gal}}
\newcommand{\ok}{$\emptyset$}
\newcommand{\te}[2]{$\{{#1}\}_{#2}$}
\newcommand{\sopra}[2]{\genfrac{}{}{0pt}{1}{#1}{#2}}
\def\smallmat#1#2#3#4{
    \left(\sopra{#1}{#3}\sopra{#2}{#4}\right)}
\newtheorem{definition}{Definition}[section]
\newtheorem{theorem}[definition]{Theorem}
\newtheorem{remark}[definition]{Remark}
\newtheorem{proposition}[definition]{Proposition}
\begin{document}

\markboth{T. Boggio, A. Mori}{Power multiples in binary recurrence sequences}

\title{Power multiples in binary recurrence sequences: \\ an approach by congruences}
\author{\Large Teresa Boggio, Andrea Mori\\
             \large Dipartimento di Matematica\\
             Universit\`a di Torino\\ Torino, Italia}
             
 \date{}
 
 \maketitle

\begin{abstract}
We introduce an elementary congruence-based procedure to look for $q$-th power multiples in 
arbitrary binary recurrence sequences ($q\geq3$). The procedure allows to prove that no
 such multiples exist in many instances.

\smallskip\noindent
2000 Mathematics Subject Classification: 11B39, 11B50.
\end{abstract}

%%% INTRODUCTION

\section{Introduction and result}
Let $u,v,A,B\in\ZZ$. The ($\ZZ$-valued) binary recurrence sequence with initial values $u$, $v$ and coefficients $A$, $B$ is the sequence $\{G_n\}_{n\geq0}$ defined recursively as
\begin{equation}
G_0=u,\quad G_1=v,\quad G_{n+2}=AG_{n+1}+BG_n \textrm{ for all $n\geq0$}.
\label{seqdef}
\end{equation}
The discriminant of the sequence \eqref{seqdef} is the integer $\De=A^2+4B\neq0$.
An equivalent description is
\begin{equation}
\left(\begin{array}{c}G_{n+2} \\G_{n+1}\end{array}\right)=
\left(\begin{array}{cc}A & B \\1 & 0\end{array}\right)
\left(\begin{array}{c}G_{n+1} \\G_{n}\end{array}\right),
\label{matdef}
\end{equation}
i.÷e. $\left(\sopra{G_{n+1}}{G_n}\right)={\smallmat AB10}^n\left(\sopra{G_{1}}{G_0}\right)$,
for all $n\geq0$. Let $K$ be the smallest extension of $\QQ$ containing the eigenvalues  
$\{\la_1,\la_2\}$ of the matrix $\smallmat AB10$ and denote ${\cal O}_K$ its ring of integers.
 Either $K=\QQ$ or $K$ is quadratic, $K=\QQ(\sqrt{\De})$, and in the latter case write 
$\Gal(K/\QQ)=\langle\tau\rangle$.
The sequence \eqref{seqdef} is called non-degenerate if $\la_1/\la_2$ is not a root of $1$.
Also, if $\la_1\neq\la_2$ the sequence is a generalized power sum with constant coefficients, namely
$$
G_n=g_1\la_1^n+g_2\la_2^n, \qquad
\textrm{where $g_1=\frac{G_1-\la_2G_0}{\la_1-\la_2}$, $g_2=\frac{\la_1G_0-G_1}{\la_1-\la_2}$.}
$$

A sequence with values in $\ZZ$ can be \lq\lq followed\rq\rq\  looking for integers with special 
interesting arithmetic properties (Ribenboim \cite{Ri} likens this to picking wild flowers during 
a walk in the countryside). In this note we deal with the equation
\begin{equation}
G_n=kx^q
\label{mainprob}
\end{equation}
where $0\neq k\in\ZZ$ is a fixed constant and $q\geq3$. As usual, we may and shall 
assume that $q$ is a prime number. 

By relating it to Baker's theory of linear forms in logarithms,  Peth\"o \cite{Pe} and 
Shorey and Stewart \cite{SS} proved independently that \eqref{mainprob} has, 
under some mild conditions on the sequence, only finitely many solutions 
$(n,G_n,x,q)$. Peth\"o's precise version of the result is the following.

\begin{theorem}\label{th:petho}
     Let $\{G_n\}$ be a binary recurrence sequence with coprime non-zero coefficients 
     $A$ and $B$ such that $(G_0,G_1)\neq(0,0)$, $A^2\neq -jB$ for $j\in\{1,2,3,4\}$ and  
     $G_1^2-AG_0G_1-BG_0^2\neq0$. Let $\cal P$ be a finite set of primes and let $\cal S$ 
     be the set of integers divisible only by primes in $\cal P$.
     Then, there exists an effective constant $C=C(A,B,G_0,G_1,\cal P)$ such that if 
     $G_n=kx^q$ with $k\in\cal S$ and $|x|>1$ then $\max(n,|G_n|,|x|,q)<C$.
\end{theorem}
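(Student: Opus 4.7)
The plan is to turn \eqref{mainprob} into a small linear form in logarithms of algebraic numbers, both archimedean and (for the primes in $\cal P$) $p$-adic, and then contradict its smallness using the effective lower bounds of Baker--W\"ustholz together with their $p$-adic counterparts due to van der Poorten and Yu. The hypotheses of the theorem are tailored precisely to this strategy: $(G_0,G_1)\neq(0,0)$ together with $G_1^2-AG_0G_1-BG_0^2\neq0$ guarantee $\De\neq0$ and that both coefficients $g_1,g_2$ in the closed form $G_n=g_1\la_1^n+g_2\la_2^n$ are non-zero; the conditions $A^2\neq-jB$ for $j\in\{1,2,3,4\}$ eliminate precisely the cases in which $\la_1/\la_2$ would be a root of unity, so the sequence is non-degenerate; and coprimality of $A,B$ controls which primes can divide $G_n$ in ${\cal O}_K$.

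For the setup, assume $|\la_1|\geq|\la_2|$. Rewriting \eqref{mainprob} as
$$
  \frac{kx^q}{g_1\la_1^n}-1 \;=\; -\frac{g_2}{g_1}\left(\frac{\la_2}{\la_1}\right)^n
$$
yields $\bigl|kx^q/(g_1\la_1^n)-1\bigr|\ll|\la_2/\la_1|^n$, and applying $\log(1+z)$ to the left-hand side produces an archimedean linear form
$$
  \Lambda \;=\; q\log|x|+\log|k/g_1|-n\log|\la_1|
$$
satisfying $|\Lambda|\ll|\la_2/\la_1|^n$. To absorb $k\in\cal S$, I would write $k=\pm\prod_{p\in\cal P}p^{e_p}$: the $\log p$ then enter $\Lambda$ with integer coefficients $e_p$, while the exponents $e_p$ themselves are controlled by combining $v_p(G_n)$ with a $p$-adic version of the same linear form. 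In the complex-conjugate case $|\la_1|=|\la_2|$ the same method applies to the phase rather than to the modulus, so I would not treat it as fundamentally separate.

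Baker--W\"ustholz then gives $|\Lambda|\geq\exp(-C_1\log M)$, where $M=\max(n,q,|x|,\max_{p}|e_p|)$ and $C_1=C_1(A,B,G_0,G_1,\cal P)$ is effective; combined with the upper bound this yields
$$
  n\log|\la_1/\la_2|\;\leq\; C_2\log M,
$$
while Yu's $p$-adic analogue delivers matching control on each $e_p$. Together these inequalities force $n$, and therefore $|G_n|$, to be bounded by an effective constant; the hypothesis $|x|>1$ then bounds both $|x|$ and the exponent $q$. The main obstacle --- and the miracle making the whole proof work --- is the mismatch between the \emph{exponential} decay of $|\Lambda|$ in $n$ and the merely \emph{polylogarithmic} Baker lower bound in $\log n$, which can be reconciled only when $n$ is bounded; all the hard quantitative work is concentrated in the Baker-type theorems themselves, which I would invoke as black boxes and not attempt to re-prove.
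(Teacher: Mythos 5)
First, a point of context: the paper does not prove Theorem~\ref{th:petho} at all --- it is quoted from Peth\"o \cite{Pe}, with Shorey and Stewart \cite{SS} cited for an independent proof --- so there is no in-paper argument to compare yours against; what follows measures your sketch against the strategy of those sources. Your framework is the right one: effective lower bounds for archimedean and $p$-adic linear forms in logarithms are indeed the engine of both cited proofs, and your reading of the hypotheses is essentially correct (one quibble: $\De\neq0$ is forced by the case $j=4$ of $A^2\neq -jB$, not by $(G_0,G_1)\neq(0,0)$; the condition $G_1^2-AG_0G_1-BG_0^2\neq0$ is precisely $g_1g_2\neq0$).

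The genuine gap is the final deduction, ``together these inequalities force $n$ to be bounded.'' They do not. In your linear form $\Lambda=q\log|x|+\log|k/g_1|-n\log|\la_1|$ the algebraic number $x$ is an \emph{unknown} whose height is roughly $\log|x|\asymp (n/q)\log|\la_1|$, so Baker--W\"ustholz gives only $|\Lambda|\geq\exp(-C\log|x|\log B)\geq\exp(-C'(n/q)\log n)$, against the upper bound $\exp(-cn)$. Comparing the two yields $q\ll\log n$ (and Yu's theorem likewise gives $e_p\ll\log n$), i.e.\ a bound for $q$ \emph{in terms of} $n$ --- but no bound for $n$: for fixed $q$ (already for $q=2$ and the Fibonacci sequence) the two inequalities are compatible for every $n$. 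Bounding $n$ for each of the finitely many surviving exponents is the actual content of Peth\"o's paper and requires a different reduction; the standard route brings in the companion term $G_{n+1}$ via the identity $G_{n+1}^2-AG_nG_{n+1}-BG_n^2=(-B)^n\bigl(G_1^2-AG_0G_1-BG_0^2\bigr)$, factors the resulting quadratic over $K=\QQ(\sqrt{\De})$, and lands on $S$-unit/Thue--Mahler equations to which Baker's method is applied with all heights under control (this is exactly the equation $ax^{2t}+bx^ty+cy^2=d$ in the title of \cite{SS}). Your sketch omits this step entirely. A secondary but real issue is the complex-conjugate case $|\la_1|=|\la_2|$, which you dismiss as ``the same method applied to the phase'': there even the preliminary lower bound $|G_n|\gg|\la_1|^{(1-\epsilon)n}$ is itself a nontrivial Baker-type theorem, not a formal variant of the real case.
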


\begin{remark}
     When the sequence $\{G_n\}$ is non-degenerate and $k$ is any fixed integer, the finiteness 
     of the number of solutions of $G_n=k$ (i.÷e. the $x$-trivial solutions of \eqref{mainprob}) 
     follows from the Skolem-Mahler-Lech theorem, \cite[\S2.1]{EPSW}, which is independent 
     of Baker's theory.
\end{remark}

Although theorem  \ref{th:petho} reduces in principle the problem of finding all the solutions 
of \eqref{mainprob} to a finite amount of computations, from a practical point of view the 
possibility of using brute force is illusory since the constant $C$ is huge. 
Following the steps of the proof of theorem \ref{th:petho} in the arguably simplest case 
of the Fibonacci sequence $\{F_n\}$ (obtained for $u=0$, $v=1$, $A=B=1$) 
the first author \cite{Bo} found that for a solution of \eqref{mainprob} with $k=1$ the bounds are
$q\leq192^{1203}$, and $|x|\leq e^{5^{80(4q!+1)(4q!+5)}/4q!}$.
Even for a single sequence $\{G_n\}$, the problem of finding a complete solution 
of  \eqref{mainprob} may be far from trivial. For instance, it had been known for a 
while that the only squares and cubes in the Fibonacci sequence are 
$\{F_0=0, F_1=1, F_2=1, F_{12}=144\}$ and 
$\{F_0=0, F_1=1, F_2=1, F_{6}=8\}$ respectively, but to prove that those are the 
only powers, Bugeaud, Mignotte and Siksek \cite{BMS} had to combine the classical 
approach with modular methods similar to those used by Wiles to prove Fermat's last theorem.

Let us fix the exponent $q$. We present an elementary procedure, introduced in \cite{Bo}, to approximate the solutions of \eqref{mainprob} in the following sense.
The procedure outputs a large integer $N=N_q$ and a relatively small set 
${\cal J}\subset\ZZ/N\ZZ$ such that if $G_n$ solves \eqref{mainprob} then 
$\overline{n}=n\bmod N\in\cal J$. The actual computations show that the procedure 
\lq\lq converges\rq\rq\  rather quickly and in many cases yields 
${\cal J}=\emptyset$ showing the absence of solutions for the corresponding equation.

The procedure is explained in section 2 followed by some heuristics in section 3. 
A final section gives a few example of actual computations. 
We test  all non-trivial sequences $\{G_n\}$ with positive parameters $A$ and $B$, 
and non-negative initial values $G_0$ and $G_1$ with $A+B\leq4$ and 
$\max\{G_0,G_1\}\leq9$ up to shift-equivalence (see Definition \ref{df:equiv}). 
There are two kinds of tables. Tables 1 to 6 show the result of running the procedure 
in search of $q$-powers, for $q\in\{3,5,7,11,13, 17\}$. Tables 7 to 12 list the values of 
$k$ for which \eqref{mainprob} with $q=3$ or $q=5$ has no solutions for 
$2\leq k\leq30$ and $q$-power free. In particular, the following result remains proved.

\begin{theorem}
     Let $\{G_n\}$ be a binary recurrence sequence. The equation $G_n=kx^q$ has 
     no solutions in all cases labelled {\ok} in Tables 1 to 6 and for all values $(q,k)$
     listed in Tables 7 to 12 below.
\end{theorem}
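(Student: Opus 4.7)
The plan is to appeal to the correctness of the sieving procedure introduced in Section 2 and verify, entry by entry, that in each of the cases listed the procedure terminates with an empty output. By its very definition, the procedure associates to any triple consisting of a binary recurrence $\{G_n\}$, a non-zero integer $k$ and a prime exponent $q \geq 3$ a modulus $N = N_q$ together with a subset $\mathcal{J} \subset \ZZ/N\ZZ$ such that any $n$ satisfying $G_n = kx^q$ for some integer $x$ necessarily reduces modulo $N$ to an element of $\mathcal{J}$. Hence $\mathcal{J} = \emptyset$ immediately yields the non-existence of such an $n$, and the theorem reduces to the assertion that the tables correctly record those inputs for which the procedure produces this empty output.

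For each admissible sequence and each pair $(q,k)$ to be treated, I would execute the procedure explicitly. Briefly, one selects a finite collection of auxiliary moduli $m_1,\ldots,m_s$ (typically primes for which the reduction $\{G_n \bmod m_i\}$ has a manageable period $\pi_{m_i}$ and for which the subgroup of $q$-th powers in $(\ZZ/m_i\ZZ)^\times$ has small index), computes for each $i$ the set of residues $\overline{n} \bmod \pi_{m_i}$ consistent with $G_n$ lying in $k\cdot(\ZZ/m_i\ZZ)^{\times q}\cup\{0\}$, and combines these constraints via the Chinese Remainder Theorem at the common modulus $N=\mathrm{lcm}(\pi_{m_1},\ldots,\pi_{m_s})$. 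If the resulting intersection is empty the non-existence is proved; otherwise the auxiliary set is enlarged and the step repeated. Shift-equivalence (Definition \ref{df:equiv}) is invoked to reduce the verification to one representative per equivalence class, keeping the scope of the enumeration finite.

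The main obstacle is computational rather than conceptual: for every entry one must locate a collection of moduli that jointly eliminates all residue classes, balancing the growth of $N$ (and of the sets to be intersected) against the sharpness of the constraint gained at each step. The heuristics of Section 3 explain why such a collection should typically exist, and the quick convergence observed in practice ensures that the search terminates. Producing the empty $\mathcal{J}$'s needed to justify every entry is then a matter of careful implementation and systematic enumeration; the tables in Section 4 precisely record the successful outcomes, from which the theorem follows by the correctness statement of Section 2.
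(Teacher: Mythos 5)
Your proposal is correct and follows essentially the same route as the paper: the theorem is established by the correctness of the congruence sieve of Section~2 (any solution index must reduce into ${\cal J}\subset\ZZ/N\ZZ$, so ${\cal J}=\emptyset$ forces non-existence) together with the explicit machine computation, with cutoff $C_{\rm off}=10000$, whose empty outputs are exactly the entries recorded in the tables. The only cosmetic difference is that the paper restricts the auxiliary moduli to primes $\ell\equiv1\bmod q$ not dividing $Bk$ and imposes the growth condition $n_{i+1}/n_i<q$ on the successive lcm's of the periods, details your description subsumes.
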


An analysis of the tables 1--6 shows that in many cases, up to replacing $N$ 
by a large divisor, the set $\cal J$ consists of just one element, so that up to 
shift-equivalence we may assume that ${\cal J}=\{\overline{0}\}$. 
The following question arises naturally. Suppose that there is a (large) integer 
$N$ such that  a solution of $G_n=kx^q$ can occur only for $n\equiv0\bmod N$. 
Can we obtain further information on the set of solutions from arithmetic properties 
of the triple $(k,q,N)$? In particular, can we deduce the finiteness of the 
number of solutions independently of Baker's theory?

%%% CHAPTER ONE

\section{The procedure}
We shall assume that $AB\neq0$. The binary recurrence sequence \eqref{seqdef} extends 
uniquely to a function 
$\ZZ\rightarrow\ZZ[1/B]$ in such a way that the recurrence relation 
$G_{n+2}=AG_{n+1}+BG_n$ remains valid for all $n\in\ZZ$. Namely, set inductively
$$
G_{-n}=-\frac ABG_{-n+1}+\frac1BG_{-n+2}\qquad\textrm{for all $n>0$.}
$$
\begin{definition}\label{df:equiv}
Two extended binary recurrence sequences $\{G_n\}$ and 
$\{G_n^\prime\}$ are called shift-equivalent if there exists 
$k\in\ZZ$ such that $G_n^\prime=G_{n+k}$ for all $n\in\ZZ$.
\end{definition}

\begin{proposition}
  \begin{enumerate}
  \item Two sequences not of the form $\{g\mu^n\}$ are shift-equiva\-lent if and only if they 
            share four equal consecutive terms.
  \item The sequences $\{g\mu^n\}$ and $\{G_n\}$ are shift-equivalent if and only if 
            $G_n=g^\prime\mu^n$ with $g^\prime=g\mu^k$ for some $k\in\ZZ$.
\end{enumerate}
\end{proposition}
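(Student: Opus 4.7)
The plan is as follows. Part 2 unpacks the definitions directly: if $G_n=g'\mu^n$ with $g'=g\mu^k$, then $G_n=g\mu^{n+k}$, so $\{G_n\}$ is the shift of $\{g\mu^n\}$ by $k$, and conversely a shift-equivalence $G_n=g\mu^{n+k}$ yields $G_n=(g\mu^k)\mu^n$. No real work is required here.

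For part 1, the direction $(\Rightarrow)$ is trivial. For $(\Leftarrow)$, I would argue as follows. Let $\{G_n\}$ and $\{G'_n\}$ satisfy recurrences with coefficients $(A,B)$ and $(A',B')$ respectively, and suppose they share four consecutive equal terms: there exist $m,m'\in\ZZ$ with $G_{m+i}=G'_{m'+i}$ for $i=0,1,2,3$. Applying both recurrences at positions $m,m+1$ (respectively $m',m'+1$) and subtracting the corresponding identities yields the homogeneous linear system
\begin{align*}
  (A-A')G_{m+1}+(B-B')G_m &= 0,\\
  (A-A')G_{m+2}+(B-B')G_{m+1} &= 0
\end{align*}
in the unknowns $A-A'$ and $B-B'$. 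Its coefficient determinant equals $D_m:=G_{m+1}^2-G_m G_{m+2}$. Note that four consecutive shared terms are exactly what is needed to produce two such equations; three would produce only one and would be insufficient.

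The essential step is to evaluate $D_m$ using the closed form $G_n=g_1\la_1^n+g_2\la_2^n$; a short expansion gives
\[
  D_m=-g_1g_2(\la_1\la_2)^m(\la_1-\la_2)^2=-g_1g_2(-B)^m\De.
\]
Since $B\neq0$ and $\De\neq0$, this quantity vanishes if and only if $g_1g_2=0$, which by the closed form is precisely the condition that $\{G_n\}$ has the form $\{g\mu^n\}$. Under the hypothesis of part 1 we therefore obtain $A=A'$ and $B=B'$, so both sequences obey the same recurrence. Since they also agree at two consecutive positions with shift $k:=m-m'$, an induction along the recurrence (run forward, and backward using $B\neq0$) propagates the equality to all of $\ZZ$, yielding $G'_n=G_{n+k}$ and hence shift-equivalence.

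The main conceptual point, and the only non-routine step, is the identification of the algebraic condition $g_1g_2\neq0$ with the combinatorial hypothesis that $\{G_n\}$ is not of the form $\{g\mu^n\}$; everything else is straightforward linear algebra and recursion unwinding.
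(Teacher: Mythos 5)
Your proof is correct and follows essentially the same route as the paper: both arguments recover the coefficients $A,B$ from four shared consecutive terms by a $2\times2$ linear system and identify the vanishing of its determinant with the sequence being of the form $\{g\mu^n\}$, then propagate equality from a common length-2 segment by the recurrence. The only difference is cosmetic --- the paper characterizes the degenerate case by the relation $G_k^2=AG_kG_{k-1}+BG_{k-1}^2$, whereas you evaluate the determinant explicitly as $-g_1g_2(-B)^m\De$ via the closed form, which is legitimate since the standing assumption $\De\neq0$ guarantees $\la_1\neq\la_2$.
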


\begin{proof}
The sequences $\{G_n\}_{n\in\ZZ}$ and $\{G_n^\prime\}_{n\in\ZZ}$ 
with same parameters $A$ and $B$ are shift-equivalent if and only if they have 
a common segment of length 2, $G^\prime_{r}=G_s$ and $G^\prime_{r+1}=G_{s+1}$ 
for some $r$, $s\in\ZZ$. When $G_{k}^2\neq AG_{k}G_{k-1}+BG_{k-1}^2$ for some (or, equivalently, all) 
$k\in\ZZ$ the parameters $A$ and $B$ can be recovered from the consecutive terms 
$G_{k-1},\cdots,G_{k+2}$ by solving the linear equations
$$
\left\{
\begin{array}{rcl}
G_{k+2} & = & AG_{k+1}+BG_k \\
G_{k+1} & = & AG_{k}+BG_{k-1} 
\end{array}
\right.
$$
This proves part 1 once we observe that the sequences of the form $\{g\mu^n\}$ are precisely those for which 
$G_{k}^2=AG_{k}G_{k-1}+BG_{k-1}^2$.
Part 2 is immediate.
\end{proof}

The previous fact remains true for $R$-valued sequences, where $R$ is any domain of characteristic 
prime to $B$.

\begin{definition}
    Let $\ell$ be a prime number, $(\ell,B)=1$. The reduction modulo $\ell$ of the $\ZZ$-valued 
    binary recurrence sequence \eqref{seqdef} is the sequence $\{\overline{G}_n\}$ where 
    $\overline{G}_n\in\FF_\ell=\ZZ/\ell\ZZ$ is the class of $G_n$.
\end{definition}

The reduced sequence $\{\overline{G}_n\}$ is an $\FF_\ell$-valued binary recurrence 
sequence with parameters $\overline A$ and $\overline B\neq0$ and initial values 
$\overline u$, $\overline v$. Its extension  $\{\overline{G}_n\}_{n\in\ZZ}$ is the 
reduction modulo $\ell$ of the extension $\{G_n\}$. The following very simple
fact is the basis of the procedure.

\begin{proposition}
   Let $\{\overline{G}_n\}$ be an extended  $\FF_\ell$-valued binary recurrence 
   sequence. Then $\{\overline{G}_n\}$ is periodic.
\end{proposition}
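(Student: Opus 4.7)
The plan is to exploit the finiteness of $\FF_\ell^2$ together with the invertibility of the recurrence (which is guaranteed by $\overline{B}\neq0$).

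First I would consider the consecutive-pair map
$T:\FF_\ell^2\to\FF_\ell^2$, $T(x,y)=(y,\overline{A}y+\overline{B}x)$,
so that $T(\overline{G}_n,\overline{G}_{n+1})=(\overline{G}_{n+1},\overline{G}_{n+2})$ for all $n\in\ZZ$. Because $\overline{B}\neq0$, the matrix of $T$ has determinant $-\overline{B}\neq0$, so $T$ is a bijection of the finite set $\FF_\ell^2$. Equivalently, the extended sequence is obtained by iterating $T$ (and $T^{-1}$) on the seed pair $(\overline{G}_0,\overline{G}_1)$; invertibility here is just the $\FF_\ell$-version of the extension rule $G_{-n}=-\tfrac{A}{B}G_{-n+1}+\tfrac1B G_{-n+2}$ used earlier in the paper.

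Next, by the pigeonhole principle applied to the $\ell^2+1$ pairs $(\overline{G}_0,\overline{G}_1),\ldots,(\overline{G}_{\ell^2},\overline{G}_{\ell^2+1})$, there exist $0\leq r<s\leq \ell^2$ with $(\overline{G}_r,\overline{G}_{r+1})=(\overline{G}_s,\overline{G}_{s+1})$, i.e.\ $T^r(\overline{G}_0,\overline{G}_1)=T^s(\overline{G}_0,\overline{G}_1)$. Applying $T^{-r}$ to both sides (this is where bijectivity of $T$ is used) yields $T^{s-r}(\overline{G}_0,\overline{G}_1)=(\overline{G}_0,\overline{G}_1)$. Setting $N:=s-r$, induction on $n$ (forward via $T$, backward via $T^{-1}$) gives $T^{n+N}(\overline{G}_0,\overline{G}_1)=T^n(\overline{G}_0,\overline{G}_1)$ for every $n\in\ZZ$, hence $\overline{G}_{n+N}=\overline{G}_n$ for all $n\in\ZZ$.

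There is no genuine obstacle: the only subtle point is making sure the periodicity extends to negative indices, and this is handled precisely by noting that $\overline{B}\neq0$ makes $T$ a bijection, so the orbit of $(\overline{G}_0,\overline{G}_1)$ is a true cycle in $\FF_\ell^2$ rather than a one-sided eventually periodic trajectory.
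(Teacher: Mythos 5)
Your proof is correct and follows essentially the same route as the paper: pigeonhole on the finitely many consecutive pairs in $\FF_\ell\times\FF_\ell$ to find a repeated pair, then induction in both directions to propagate the coincidence to all of $\ZZ$. You merely make explicit (via the invertibility of the shift map $T$, coming from $\overline{B}\neq0$) the backward step that the paper compresses into ``an obvious induction.''
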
 

\begin{proof}
    Since there are only a finite number of pairs $(a,b)\in\FF_\ell\times\FF_\ell$, there must be integers 
    $r\neq s$ such that $\overline{G}_r=\overline{G}_s$ and 
    $\overline{G}_{r+1}=\overline{G}_{s+1}$. If $0\neq k=s-r$, an obvious induction shows that
     the sequences $\{\overline{G}_{n}\}$ and $\{\overline{G}_{n+k}\}$ coincide.
\end{proof}

\begin{definition}
For a prime number $\ell$, let $\pi_\ell$ be the minimal period of the extended 
$\FF_\ell$-valued reduced sequence $\{\overline{G}_{n}\}$, i.÷e.
$$
\pi_\ell=\min\left\{k\in\ZZ^{>0}
\textrm{ such that $\overline{G}_{n+k}=\overline{G}_n$ for all $n\in\ZZ$}\right\}.
$$
\end{definition}

\begin{proposition}\label{divide}
  Let $\ell$ be a prime number. The period $\pi_\ell$ is a divisor of
  \begin{enumerate}
  \item $\ell(\ell-1)$, if $\De=0$ or if  $\De$ is not a square in $\ZZ$ with $\ell\mid\De$;
  \item $\ell-1$, if $\De$ is a non-zero square or if $\left(\frac{\De}{\ell}\right)=1$;
  \item $\ell^2-1$, if $\De$ is not a square and $\left(\frac{\De}{\ell}\right)=-1$
\end{enumerate}
\end{proposition}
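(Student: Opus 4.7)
The plan is to bound the period via the order of the reduced companion matrix $\overline M = \smallmat{\overline A}{\overline B}{1}{0}\in GL_2(\FF_\ell)$, which is invertible because $\det\overline M = -\overline B \in \FF_\ell^{\times}$ by the assumption $(\ell,B)=1$. Iterating the matricial form \eqref{matdef} modulo $\ell$ gives $\left(\sopra{\overline G_{n+1}}{\overline G_n}\right) = \overline M^n\left(\sopra{\overline G_1}{\overline G_0}\right)$, so any $k$ with $\overline M^k = I$ forces $\overline G_{n+k}=\overline G_n$ for every $n\in\ZZ$. Hence $\pi_\ell$ divides the order of $\overline M$ in $GL_2(\FF_\ell)$, and the proposition reduces to bounding this order, which is controlled by the splitting in $\overline{\FF}_\ell$ of the characteristic polynomial $X^2-\overline AX-\overline B$, whose discriminant is $\overline\De$.

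Cases (2) and (3) are the easy ones, in which $\overline M$ is semisimple. If $\left(\frac{\overline\De}{\ell}\right)=1$ (a situation which includes $\De$ a non-zero integer square coprime to $\ell$) the polynomial splits into two distinct linear factors over $\FF_\ell$, so $\overline M$ is diagonalizable over $\FF_\ell$ with eigenvalues in $\FF_\ell^{\times}$, and its order divides the exponent $\ell-1$ of $\FF_\ell^{\times}$. If $\left(\frac{\overline\De}{\ell}\right)=-1$ the polynomial is irreducible over $\FF_\ell$; its two roots lie in $\FF_{\ell^2}\setminus\FF_\ell$ and are Frobenius-conjugate, so $\overline M$ becomes diagonalizable over $\FF_{\ell^2}$ with eigenvalues in $\FF_{\ell^2}^{\times}$, giving $\mathrm{ord}(\overline M)\mid\ell^2-1$.

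Case (1), with $\overline\De=0$, is the only delicate one because $\overline M$ may fail to be semisimple. The characteristic polynomial has a double root $\la\in\FF_\ell^{\times}$ (non-zero because $-\overline B = \det\overline M$ is a unit), and $\overline M$ is conjugate in $GL_2(\FF_\ell)$ either to $\la I$, of order dividing $\ell-1$, or to the Jordan block $\smallmat{\la}{1}{0}{\la}$. A direct induction gives the $k$-th power of the latter as $\smallmat{\la^k}{k\la^{k-1}}{0}{\la^k}$, which is the identity iff $\la^k=1$ and $\ell\mid k$; hence the order equals $\ell\cdot\mathrm{ord}(\la)$ and divides $\ell(\ell-1)$. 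The prime $\ell=2$ can be dispatched by direct inspection, the only matrix with $\overline\De=0$ being $\smallmat{0}{1}{1}{0}$ of order $2=\ell(\ell-1)$.

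I do not foresee any substantive obstacle, since the whole argument is the classification of cyclic subgroups of $GL_2(\FF_\ell)$ read through the Jordan form. The one piece of bookkeeping that needs care is the exceptional sub-case $\De=d^2$ with $\ell\mid d$: it should be treated under (1) through the criterion $\ell\mid\overline\De$, even though (2) nominally mentions ``$\De$ a non-zero square'', because there $\overline M$ may lie in the unipotent-by-torus Borel subgroup rather than in a split torus.
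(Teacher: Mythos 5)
Your argument is correct, and at the top level it is the same as the paper's: both proofs reduce the claim to bounding the order of the reduced companion matrix, observing that $\pi_\ell\mid\mathrm{ord}(\overline{M})$ (the paper phrases $\pi_\ell$ as the order of $\overline M$ in the quotient by the stabilizer of the initial vector), and then split into cases according to the discriminant. Where you genuinely diverge is in how that order is computed. The paper conjugates $M$ over $K$ to $\smallmat{\la_1}00{\la_2}$ (or to a Jordan block over $\QQ$ when $\De=0$) and reads off the order from the structure of $({\cal O}_K/\ell{\cal O}_K)^\times$; you instead classify the conjugacy class of the already-reduced matrix $\overline M$ in $\mathrm{GL}_2(\FF_\ell)$ via the factorization of $X^2-\overline AX-\overline B$ over $\FF_\ell$. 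Your route buys something concrete: the paper's step ``$M\sim\smallmat{\la_1}00{\la_2}$, hence $\mathrm{ord}(\overline M)$ is the lcm of the orders of $\overline\la_1,\overline\la_2$'' tacitly requires the eigenvector matrix, whose determinant is $\pm\sqrt\De$, to remain invertible modulo $\ell$, and this fails precisely when $\ell\mid\De$. Your closing remark about the sub-case $\De=d^2$ with $\ell\mid d$ pinpoints exactly the spot where both the paper's proof and the literal statement of part (2) are too optimistic: there $\overline M$ is a companion matrix (hence non-scalar, hence cyclic) with a repeated eigenvalue, so it is conjugate to the Jordan block and has order $\ell\cdot\mathrm{ord}(\overline\la)$, which need not divide $\ell-1$. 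A concrete instance is $A=4$, $B=-3$, $\De=4$, $\ell=2$, $G_0=0$, $G_1=1$, where the reduction is $0,1,0,1,\dots$ and $\pi_2=2\nmid\ell-1$. So that sub-case must indeed be filed under the $\ell(\ell-1)$ bound of part (1), as you say, and your proof handles it correctly where the paper's does not.
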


\begin{proof}
From the description \eqref{matdef}, the period $\pi_\ell$ is the order of the cyclic quotient group
${\langle\overline{M}\rangle}/{\langle\overline{M}\rangle\cap S_{\overline{u},\overline{v}}}$
where $\overline{M}\in\textrm{GL}_2(\FF_\ell)$ is the reduction modulo $\ell$ of 
$M=\smallmat AB10$ and $S_{\overline{u},\overline{v}}$ is the stabilizer of the vector
$\left(\sopra{\overline{u}}{\overline{v}}\right)$ under the tautological action of 
$\textrm{GL}_2(\FF_\ell)$ on $(\FF_\ell)^2$. Thus $\pi_\ell\mid{\rm ord}(\overline{M})$.

If $\De=0$ then $K=\QQ$, $\la_1=\la_2=\la\in\ZZ$ and $M\sim\smallmat{\la}10{\la}$, 
whose order modulo $\ell$ is $\ell(\ell-1)$.

If $\De\neq0$ the eigenvalues are different, so $M\sim\smallmat{\la_1}00{\la_2}$ with 
$\la_1$, $\la_2\in\QQ$ if $\De$ is a square or $\la_2=\tau(\la_1)$ otherwise. 
Hence ${\rm ord}(\overline{M})$ is the least common divisors of the orders of 
$\overline{\la}_1$ and  $\overline{\la}_2$ as elements of 
$({\cal O}_K/\ell{\cal O}_K)^\times$. Thus the other cases follow recalling that 
$$
({\cal O}_K/\ell{\cal O}_K)^\times\simeq
\begin{cases}
   \FF_\ell^\times & \text{if $K=\QQ$}, \\
   \FF_\ell^\times\times  \FF_\ell^\times  & \text{if $K$ quadratic and $\ell$ split}, \\
   \FF_{\ell^2}^\times   & \text{if $K$ quadratic and $\ell$ inert}, \\
   (\FF_\ell[X]/(X^2))^\times   & \text{if $K$ quadratic and $\ell$ ramified}.
\end{cases}
$$
\end{proof}

The procedure goes as follows.
\begin{description}
  \item[Step 1:]  Input the defining data $(u,v,A,B)$, the equation data $(k,q)$ and 
                           fix a cutoff value $C_{\rm off}>0$.
  \item[Step 2:]  Consider the primes $\ell_1<\cdots<\ell_r\leq C_{\rm off}$ satisfying the following 
                            three conditions:
           \begin{enumerate}
                 \item $\ell_i$ does not divide $Bk$ for all $i=1, \dots, r$;
                 \item $\ell_i\equiv1\bmod q$ for all $i=1, \dots, r$;
                 \item if we set $n_1=\pi_{\ell_1}$ and define $n_{i+1}$ for $i=1, \dots r-1$ inductively as 
                          $n_{i+1}=\textrm{lcm}(n_{i},\pi_{\ell_{i+1}})$, then $n_{i+1}/n_i<q$ 
                          for all $i=1, 2, \dots r-1$.
           \end{enumerate}
  \item[Step 3:]  Construct inductively sets ${\cal J}_i\subset\ZZ/n_i\ZZ$ as follows:
          \begin{enumerate}
                 \item ${\cal J}_1=\{\overline{n}\in\ZZ/n_1\ZZ\textrm{ such that }
                           \overline{G}_n/\overline{k}\in(\FF_{\ell_1})^q\}$;
                 \item for $i=1, 2, \dots r-1$, given ${\cal J}_i$ first set 
                          $$
                          {\cal J}_{i+1}^\sharp=\{\overline{n}\in\ZZ/n_{i+1}\ZZ\textrm{ such that }
                          n\bmod{n_i}\in{\cal J}_i \}
                          $$
                          and then let
                          $$
                          {\cal J}_{i+1}={\cal J}_{i+1}^\sharp-
                          \{\overline{n}\textrm{ such that } \overline{G}_n/\overline{k}\notin(\FF_{\ell_{i+1}})^q
                          \}
                          $$
          \end{enumerate}
    \item[Step 4:] If ${\cal J}_{r^\prime}=\emptyset$ for some $r^\prime\leq r$ the procedure stops,
             else let $N=n_r$ and output ${\cal J}={\cal J}_r\subset\ZZ/N\ZZ$. 
\end{description}
The reason for the conditions on the primes $\ell_i$ is the following. The subgroup 
$(\FF_\ell^{\times})^q$ of $q$-powers in the multiplicative group $\FF_\ell^\times$ is proper if and only if 
$q\mid\ell-1$, and in this case consists of $(\ell-1)/q$ elements. Thus, the number of $q$-powers in 
$\FF_\ell$ is $(q+\ell-1)/q$ and on average we can expect that at each step
$$
\left|{\cal J}_{i+1}\right|\cong\frac{q+\ell_{i+1}-1}{q\ell_{i+1}}\left|{\cal J}_{i+1}^\sharp\right|.
$$
Since $|{\cal J}_{i+1}^\sharp|=(n_{i+1}/n_i)|{\cal J}_{i}|$, by forcing $n_{i+1}/n_i\leq q-1$ 
and observing that $\lim_{i\to\infty}\frac{q+\ell_i-1}{q\ell_i}(q-1)<1$ we can expect that eventually  
$|{\cal J}_{i+1}|<|{\cal J}_{i}|$ on average, so that the procedure should eventually produce an empty 
set of indices when the equation \eqref{mainprob} has no solutions.

\begin{remark}
      The necessity of imposing condition 3 in Step 2 makes the procedure unsuited for the case $q=2$.
\end{remark}

\section{Heuristic density estimates}
The support of $n\in\ZZ$ is the set
$\mathop{\rm Supp}(n)=\{\text{$p$ prime such that $p\mid n$}\}$.
Fix an integer $m\geq2$ and let
${\cal P}_m=
\{\text{$\ell$ prime such that $\max(\mathop{\rm Supp}(\pi_\ell))\leq m$}\}$
and
$$
{\cal P}_m^\prime=
\{\text{$\ell$ prime such that $\max(\mathop{\rm Supp}({\rm ord}_\ell(\overline{M})))\leq m$}\}.
$$
Also, let ${\cal P}_{m,q}=\{\text{$\ell\in{\cal P}_m$ such that $\ell\equiv1\bmod q$}\}$
and 
$$
{\cal P}_{m,q}^\prime=\{\text{$\ell\in{\cal P}_m^\prime$ such that $\ell\equiv1\bmod q$}\}.
$$
The sets ${\cal P}_m^\prime$ and ${\cal P}_{m,q}^\prime$ depend on the coefficients $A$ and $B$, 
while the sets ${\cal P}_m$ and ${\cal P}_{m,q}$ depend also on the vector 
$\vec v=\left({{\sopra uv}}\right)\in\ZZ^2$ 
of initial values.
Since $\pi_\ell\mid{\rm ord}_\ell(\overline{M})$, we have that 
${\cal P}_m^\prime\subseteq{\cal P}_m$ and ${\cal P}_{m,q}^\prime\subseteq{\cal P}_{m,q}$.
The primes $\ell_1,\ell_2,\ldots$ of Step 2 are in ${\cal P}_{q-1,q}$.
We shall show that in the case of a non-degenerate binary recurrence sequence with 
non-zero initial vector $\vec v$, a variation 
of the classical Artin heuristics, under the usual independence hypotheses, yields that
the expected density of the sets ${\cal P}_m$, and hence ${\cal P}_{m,q}$, is $0$.

Let assume first that $K=\QQ$ and, for the sake of uniformity of the argument, also that
$\min\{|\la_1|,|\la_2|\}\geq2$. Let $\Sigma_0$ be the finite set of primes containing $2$ 
and the primes dividing $\la_1\la_2$. Consider a prime $\ell\notin\Sigma_0$ 
and write $\ell-1=ab$ where 
$\max\{\mathop{\rm Supp}(a)\}\leq m$ and $\min\{\mathop{\rm Supp}(b)\}> m$. Then 
$(\overline{\la}_1,\overline{\la}_2)\in\FF_\ell^\times\times\FF_\ell^\times$ and
\begin{eqnarray*}
	\max(\mathop{\rm Supp}({\rm ord}_\ell(\overline{M})))\leq m & \Longleftrightarrow &
	\mbox{$\overline{\la}_1$ and $\overline{\la}_2$ are $b$-powers in $\FF_\ell^\times$}  \\
	 & \Longleftrightarrow & \sopra{\displaystyle\hbox{$\overline{\la}_1$ and $\overline{\la}_2$ 
	 are $p^r$-powers in $\FF_\ell^\times$ for}}{\displaystyle\hbox{all primes $p>m$ such that 
	 $p^r\mid\mid\ell-1$}}
\end{eqnarray*}
Since the primes $\ell\equiv1\bmod p^r$ are precisely those that split completely in the 
cyclotomic extension $\QQ\subset\QQ(\mu_{p^r})$, we can rephrase the last condition 
in terms of the extensions in the diagram
\begin{equation}
\label{Qext}
\xymatrix{
\QQ(\mu_{p^{r+1}}) \ar@{-}@/_/[ddr]_{\rm I} \ar@{-}[dr] &  & 
\QQ(\mu_{p^r},\sqrt[p^r]{\la_1},\sqrt[p^r]{\la_2})\ar@{-}@/^/[ddl]^{\rm II} \ar@{-}[dl] \\
 &  \QQ(\mu_{p^r}) & \\
 & \QQ \ar@{-}[u] & 
}.
\end{equation}
Namely, $\overline{\la}_1$ and $\overline{\la}_2$ are  in $(\FF_\ell^\times)^{p^r}$ and 
$p^r\mid\mid\ell-1$ if and only if $\ell\in\Sigma_{p,r}^\prime$, where
$\Sigma_{p,r}^\prime
=\{\text{primes $\ell$ that split completely in II and do not split completely in I}\}$.
By construction, $\Sigma_{p,r}^\prime\cap\Sigma_{p,r^\prime}^\prime$ if 
$r\neq r^\prime$, and if we let  $\Sigma_p^\prime=\cup_{r\geq1}\Sigma_{p,r}^\prime$ then
\begin{equation}
\label{inters}
{\cal P}_m^\prime=\bigcap_{p>m}\Sigma_p^\prime.
\end{equation}
The following proposition is a straightforward application of Kummer's theory
to the situation of diagram \eqref{Qext}.
\begin{proposition}\label{propQ}
Suppose $p\notin\Sigma_0$. Then:
\begin{enumerate}
  \item $[\QQ(\mu_{p^r},\sqrt[p^r]{\la_i}):\QQ(\mu_{p^r})]=p^r$ for $i=1$, $2$;
  \item $\QQ(\mu_{p^r},\sqrt[p^r]{\la_1})\cap\QQ(\mu_{p^r},\sqrt[p^r]{\la_2})=\QQ(\mu_{p^r})$;
  \item $\Gal(\QQ(\mu_{p^r},\sqrt[p^r]{\la_1},\sqrt[p^r]{\la_2})/\QQ(\mu_{p^r}))\simeq
            (\ZZ/p^r\ZZ)^2$;
  \item $\QQ(\mu_{p^{r+1}})\cap\QQ(\mu_{p^r},\sqrt[p^r]{\la_1},\sqrt[p^r]{\la_2})=\QQ(\mu_{p^r})$.
\end{enumerate}
\end{proposition}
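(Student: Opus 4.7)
The plan is to reduce all four items to statements in the $\FF_p$-vector space $V:=F^\times/(F^\times)^p$, where $F:=\QQ(\mu_{p^r})$ and $[x]\in V$ denotes the class of $x\in F^\times$. By Kummer theory (applied with exponent $p^r$, since $\mu_{p^r}\subset F$), item~(1) is equivalent to $[\la_i]\neq 0$ in $V$; the unique degree-$p$ subfield of $F_i:=F(\sqrt[p^r]{\la_i})$ over $F$ is $F(\sqrt[p]{\la_i})$, and these two subfields coincide (for $i=1,2$) precisely when $[\la_1]$ and $[\la_2]$ generate the same $\FF_p$-line in $V$, so item~(2) amounts to the $\FF_p$-linear independence of $[\la_1]$ and $[\la_2]$. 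Item~(3) then follows automatically from items~(1)--(2) together with the fact that $F_1/F$ and $F_2/F$ are cyclic Galois. Finally, if $\QQ(\mu_{p^{r+1}})\subset E_2:=F_1F_2$, then $\zeta_{p^{r+1}}$, which generates a degree-$p$ extension of $F$, already lies in the maximal exponent-$p$ subextension $F(\sqrt[p]{\la_1},\sqrt[p]{\la_2})$ of $E_2/F$, and Kummer theory yields a relation $[\zeta_{p^r}]=a[\la_1]+b[\la_2]$ in $V$ for some $a,b\in\ZZ$; thus item~(4) reduces to showing that $[\zeta_{p^r}]$ is not in the $\FF_p$-span of $[\la_1]$ and $[\la_2]$.

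For items~(1)--(3), my key input is the following \emph{descent lemma}: for $p$ odd, the natural map $\QQ^\times/(\QQ^\times)^p\hookrightarrow V$ is injective. I would prove this by contradiction. Suppose $a\in\QQ^\times\setminus(\QQ^\times)^p$ admits $a=\alpha^p$ for some $\alpha\in F$. Then $\QQ(\alpha)$ is a subfield of degree $p$ of the abelian extension $F/\QQ$ and is therefore Galois over $\QQ$; but $\QQ(\alpha)=\QQ(\sqrt[p]{a})$ is not Galois over $\QQ$ when $p$ is odd and $a\notin(\QQ^\times)^p$, since its Galois closure $\QQ(\mu_p,\sqrt[p]{a})$ has degree $p(p-1)>p$. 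Granted the descent lemma, items~(1) and~(2) reduce to the corresponding statements for $\QQ^\times/(\QQ^\times)^p$, which hold for $p\notin\Sigma_0$ after enlarging $\Sigma_0$ by the finitely many primes that divide either the exponents in the prime factorisations of $\la_1,\la_2$ or the $2\times 2$ minors of the matrix of $q$-adic valuations of $\la_1$ and $\la_2$.

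For item~(4) the descent lemma does not apply, since $\zeta_{p^r}\notin\QQ$, and I would argue instead by Galois equivariance. Any $\sigma\in\Gal(F/\QQ)$ fixes the classes $[\la_i]$ (as $\la_i\in\QQ$) while acting on $[\zeta_{p^r}]\in V$ by multiplication by $\chi(\sigma)\bmod p$, where $\chi\colon\Gal(F/\QQ)\xrightarrow{\sim}(\ZZ/p^r\ZZ)^\times$ is the tautological cyclotomic character. Applying $\sigma$ to the hypothetical relation $[\zeta_{p^r}]=a[\la_1]+b[\la_2]$ yields $(\chi(\sigma)-1)[\zeta_{p^r}]=0$ in $V$. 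Now $[\zeta_{p^r}]$ has order exactly $p$ in $V$ (a $p$-th root of $\zeta_{p^r}$ would be $\zeta_{p^{r+1}}$, which is not in $F$), so this forces $\chi(\sigma)\equiv 1\pmod p$ for every $\sigma$; this contradicts the surjectivity of the mod-$p$ reduction $\Gal(F/\QQ)\twoheadrightarrow(\ZZ/p\ZZ)^\times$, valid for any $p>2$.

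The principal obstacle is the descent lemma together with the bookkeeping of exceptional primes required to guarantee the $\FF_p$-linear independence of $[\la_1]$ and $[\la_2]$ in item~(2): this implicitly demands the multiplicative independence of $\la_1$ and $\la_2$ in $\QQ^\times$, a hypothesis strictly stronger than non-degeneracy of the recurrence. Beyond this, items~(1) and~(3) are formal, and the argument for item~(4) is a clean Galois-equivariance computation that needs no further hypothesis.
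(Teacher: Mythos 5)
Your proposal is correct, and it is worth saying up front that the paper offers no proof to compare it against: Proposition \ref{propQ} is dispatched with the single sentence that it is ``a straightforward application of Kummer's theory to the situation of diagram \eqref{Qext}.'' Your write-up is a faithful execution of that intended strategy, but every actual ingredient is your own. The reduction of (1) and (2) to statements about the classes $[\la_i]$ in $\QQ(\mu_{p^r})^\times/(\QQ(\mu_{p^r})^\times)^p$, the descent lemma (injectivity of $\QQ^\times/(\QQ^\times)^p\rightarrow F^\times/(F^\times)^p$ via the observation that $\QQ(\sqrt[p]{a})$ is not Galois for odd $p$), and the deduction of (3) from (1)--(2) by the standard compositum argument are all sound. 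Your treatment of (4) is particularly clean: identifying $\QQ(\mu_{p^{r+1}})$ with $F(\sqrt[p]{\zeta_{p^r}})$ and then killing the hypothetical Kummer relation $[\zeta_{p^r}]=a[\la_1]+b[\la_2]$ by $\Gal(F/\QQ)$-equivariance is correct, needs only $p>2$, and requires no hypothesis on the $\la_i$ at all.

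The one substantive point is the caveat you raise yourself, and you are right to raise it: part (2) genuinely requires the $\FF_p$-linear independence of $[\la_1]$ and $[\la_2]$, hence (for all but finitely many $p$) the multiplicative independence of $\la_1$ and $\la_2$ in $\QQ^\times$, which does not follow from non-degeneracy or from $p\notin\Sigma_0$ as $\Sigma_0$ is defined in the paper (the primes dividing $2\la_1\la_2$). For instance $\la_1=2$, $\la_2=4$ (so $A=6$, $B=-8$, $\De=4$) gives a non-degenerate sequence with $\min\{|\la_1|,|\la_2|\}\geq2$, yet $\QQ(\mu_{p^r},\sqrt[p^r]{2})=\QQ(\mu_{p^r},\sqrt[p^r]{4})$ for every odd $p$, so part (2) fails for \emph{every} admissible $p$ and cannot be repaired by enlarging $\Sigma_0$. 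This is a defect of the proposition as stated, not of your argument; your proof establishes the corrected statement (with multiplicative independence assumed, and with $\Sigma_0$ enlarged by the finitely many primes dividing the gcd of the exponent vectors and the $2\times2$ valuation minors), which is what the heuristic in that section actually needs. Since the paper never writes the proof down, it silently elides exactly this hypothesis.
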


In particular, for $p\notin\Sigma_0$ point 4 says that $\Sigma_{p,r}^\prime\neq\emptyset$ 
and by \v{C}ebotarev's theorem the expected density of $\Sigma_{p,r}^\prime$ is
\begin{eqnarray*}
\delta(\Sigma_{p,r}^\prime) & = & \left(1-\frac1{[\QQ(\mu_{p^{r+1}}):\QQ(\mu_{p^r})]}\right)\frac1{[\QQ(\mu_{p^r},\sqrt[p^r]{\la_1},\sqrt[p^r]{\la_2}):\QQ]} \\
  & = & \frac{p-1}p\frac1{p^{3r-1}(p-1)} = \frac1{p^{3r}}
\end{eqnarray*}
so that $\delta(\Sigma_p^\prime)=\sum_{r\geq1}p^{-3r}=1/(p^3-1)$. Applying the independence
assumption to \eqref{inters} yields the expected value
$$
\delta({\cal P}_m^\prime)=
\prod_{\begin{subarray}{c}p>m\\ p\in\Sigma_0\end{subarray}}\delta(\Sigma_p^\prime)
\prod_{\begin{subarray}{c}p>m\\ p\notin\Sigma_0\end{subarray}}\frac1{p^3-1}=0.
$$

Let $\ell\in{\cal P}_m-{\cal P}_m^\prime$, $\ell\notin\Sigma_0$. 
Then $M^{\pi_\ell}\not\equiv I\bmod\ell$ and yet
\begin{equation}
\label{eq:eigen}
M^{\pi_\ell}{\vec v}\equiv{\vec v}\bmod\ell
\end{equation}
In order for this to be possible, the matrix $M^{\pi_\ell}\bmod\ell$ must admit $1$ as an eigenvalue. 
Thus a prime $\ell\notin\Sigma_0$ is in ${\cal P}_m-{\cal P}_m^\prime$ if and only if
 the following two conditions are satisfied.
 \begin{enumerate}
  \item[C1.] Exactly one of the eigenvalues $\la_1$, $\la_2$ is a $b$-power in $\FF_\ell^\times$. 
            Equivalently, exactly one of the eigenvalues $\la_1$, $\la_2$ is a $p^r$-power in 
            $\FF_\ell^\times$ for all $p^r\mid\mid\ell-1$ with $p>m$.
  \item[C2.] If $\la$ is the eigenvalue of condition C1, then ${\vec v}\bmod \ell\in E_\la$ where 
           $E_\la\subset(\ZZ/\ell\ZZ)^2$ is the $\la$-eigenspace of $M\bmod\ell$.
\end{enumerate}
Denote ${\cal P}_m^\flat$ the set of primes satisfying condition C1 only. As above
${\cal P}_m^\flat=\bigcap_{p>m}\Sigma_p$ where $\Sigma_p=\bigcup_{r\geq1}\Sigma_{p,r}$
is a disjoint union with
$$
\Sigma_{p,r}=\left\{
\sopra
{\displaystyle\hbox{$\ell$ that split completely in one extension 
                                   $\QQ\subset\QQ(\mu_{p^r},\sqrt[p^r]{\la_j})$, }}
{\displaystyle\hbox{$j=1$, $2$, but not in both or in I  of diagram \ref{Qext}}}
\right\}.
$$
Given $T>0$, let $\left(\sopra00\right)\notin{\cal V}_T\subset\ZZ^2$ be a finite set 
such that the restriction of the product of quotient maps
$$
{\cal V}_T\longrightarrow\prod_{\begin{subarray}{c}\ell\leq T\\ \ell\in{\cal P}_m^\flat\end{subarray}}
(\ZZ/\ell\ZZ)^2
$$
is a bijection and ${\cal V}_T\subseteq{\cal V}_{T^\prime}$ for $T\leq T^\prime$. 
Then, denoting (as usual) $\pi(T)$ the number of primes less than $T$ and 
making explicit the dependence of ${\cal P}_m$ on the initial vector,
$$
\delta_T:=\frac1{|{\cal V}_T|}\sum_{{\vec v}\in{\cal V}_T}
\frac
{\left|\left\{\hbox{$\ell\in{\cal P}({\vec v})_m$ such that $\ell\leq T$}\right\}\right|}
{\pi(T)}=\frac1{\pi(T)}
\sum_{\begin{subarray}{c}\ell\leq T\\ \ell\in{\cal P}_m^\flat\end{subarray}}\frac1\ell
$$
because $|E_\lambda|=\ell$. Thus, $\delta=\lim_{T\to\infty}\delta_T$ is the average density
of the sets ${\cal P}({\vec v})_m$ for ${\vec v}\in\bigcup_{T}{\cal V}_T$. On the other hand, 
$\delta_T<\pi(T)^{-1}\sum_{n=1}^T1/n$ and the well-known asymptotics 
$\pi(T)\sim T\log(T)^{-1}$ and $\sum_{n=1}^T1/n\sim\log(T)$ yield $\delta=0$. 
Since the set ${\cal V}_T$ can be constructed 
 so to contain any given $0\neq{\vec v}\in\ZZ^2$, we get an estimated density
$$
\delta({\cal P}(\vec v)_m)=0,\hbox{ for all ${\vec v}\neq0$,\qquad if $K=\QQ$}.
$$

Let us assume now that $K$ is 	quadratic and let $\la=\la_1$. Note that non-degeneracy 
is equivalent to the subgroup 
$\langle\la,\la^\tau\rangle<K^\times$ being free of rank 2. This time let $\Sigma_0$ 
be the finite set of primes containing $2$, the primes dividing ${\rm N}_{K/\QQ}(\la)$,
 the  primes such that $K\subset\QQ(\mu_{\ell^\infty})$ and the primes that 
 are ramified in $K$. Let $\ell\notin\Sigma_0$. If $\ell$ is split in K, then 
$\overline{\la}\in({\cal O}_K/\ell{\cal O}_K)^\times\simeq\FF_\ell^\times\times\FF_\ell^\times$.
The situation is very similar to the case $K=\QQ$ and we omit the details.

If $\ell$ is inert in $K$, then 
$\overline{\la}\in({\cal O}_K/\ell{\cal O}_K)^\times\simeq\FF_{\ell^2}^\times$. 
Write $\ell^2-1=ab$ where 
$\max\{\mathop{\rm Supp}(a)\}\leq m$ and $\min\{\mathop{\rm Supp}(b)\}> m$. Then 
\begin{eqnarray*}
	\max(\mathop{\rm Supp}({\rm ord}_\ell(\overline{M})))\leq m & \Longleftrightarrow &
	\mbox{$\overline{\la}$ is a $b$-power in $\FF_{\ell^2}^\times$}  \\
	 & \Longleftrightarrow & \sopra{\displaystyle\hbox{$\overline{\la}$ is a $p^r$-powers in 
	 $\FF_{\ell^2}^\times$ for all}}{\displaystyle\hbox{primes $p>m$ such that 
	 $p^r\mid\mid\ell^2-1$}}
\end{eqnarray*}
Let $\Sigma_{p,r}^\prime$ be the set of primes satisfying the latter condition at $p$.
Consider the diagram of Galois extensions
\begin{equation}
\label{Kext}
\xymatrix{
 & & K(\mu_{p^r},\sqrt[p^r]{\la}, \sqrt[p^r]{\la^\tau}) \\
 K(\mu_{p^r}) \ar@{-}[urr]^{\Ga_K^\prime} & & \\
\QQ(\mu_{p^r})\ar@{-}[u] & K \ar@{-}[ul]_H \ar@{-}[uur]^{\Ga_K}_{\rm III}  & \\
  & & \QQ \ar@{-}@/^/[ull] \ar@{-}[ul] \ar@{-}[uuu]_\Ga 
}.
\end{equation}
Then
$$
\Sigma_{p,r}^\prime=\{\text{primes $\ell$ that split completely in $\rm III$ and 
such that $\ell\not\equiv\pm1\bmod p^{r+1}$}\}.
$$
Again, $\Sigma_{p,r}^\prime\cap\Sigma_{p,r^\prime}^\prime=\emptyset$ if $r\neq r^\prime$
and if we let $\Sigma_p^\prime=\cup_{r\geq1}\Sigma_{p,r}^\prime$, then
\begin{equation}
\label{inert}
\widetilde{{\cal P}}_m^\prime=\{\text{$\ell\in{\cal P}_m^\prime$ such that 
$\ell$ is inert in $K$}\}=\bigcap_{p>m}\Sigma_p^\prime.
\end{equation}
The analogous of proposition \ref{propQ} is the following
\begin{proposition}\label{propK}
Suppose $p\notin\Sigma_0$ and $\la/\la^\tau$ not a root of $1$. Then:
\begin{enumerate}
  \item $[K(\mu_{p^r},\sqrt[p^r]{\la}):K(\mu_{p^r})]=
              [K(\mu_{p^r},\sqrt[p^r]{\la^\tau}):K(\mu_{p^r})]=p^r$;
  \item $K(\mu_{p^r},\sqrt[p^r]{\la})\cap K(\mu_{p^r},\sqrt[p^r]{\la^\tau})=K(\mu_{p^r})$;
  \item $\Gal(K(\mu_{p^r},\sqrt[p^r]{\la},\sqrt[p^r]{\tau(\la}))/K(\mu_{p^r}))\simeq
            (\ZZ/p^r\ZZ)^2$;
  \item $\QQ(\mu_{p^{r+1}})\cap K(\mu_{p^r},\sqrt[p^r]{\la},\sqrt[p^r]{\la^\tau})=\QQ(\mu_{p^r})$.
\end{enumerate}
\end{proposition}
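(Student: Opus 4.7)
The plan is to adapt the proof of Proposition \ref{propQ} to the quadratic base field $K$, working over $F=K(\mu_{p^r})$ which contains $\mu_{p^r}$, so that Kummer theory applies. The hypothesis $p\notin\Sigma_0$ ensures that $p$ is odd, unramified in $K$, coprime to $N_{K/\QQ}(\la)=\pm B$, and that $K\not\subset\QQ(\mu_{p^\infty})$; from the last condition one deduces $[F:\QQ(\mu_{p^r})]=2$ and $F\cap\QQ(\mu_{p^{r+1}})=\QQ(\mu_{p^r})$. The non-degeneracy hypothesis that $\la/\la^\tau$ is not a root of unity supplies the multiplicative independence of $\la$ and $\la^\tau$.

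All four parts are deduced from a single claim: the assignment $(a,b)\mapsto\la^a(\la^\tau)^b$ induces an injection from $(\ZZ/p\ZZ)^2$ into $F^\times/\bigl(\mu_F\cdot(F^\times)^p\bigr)$, where $\mu_F$ denotes the group of roots of unity of $F$. The Kummer dictionary applied to the Galois extension $M=F(\sqrt[p^r]{\la},\sqrt[p^r]{\la^\tau})$ over $F$ then yields parts 1--3. For part 4, since $[\QQ(\mu_{p^{r+1}}):\QQ(\mu_{p^r})]=p$ is prime, the intersection $\QQ(\mu_{p^{r+1}})\cap M$ is either $\QQ(\mu_{p^r})$ or all of $\QQ(\mu_{p^{r+1}})$; assuming the latter, $\zeta_{p^{r+1}}\in M$, so the exponent-$p$ subextension $F(\zeta_{p^{r+1}})=F(\sqrt[p]{\zeta_{p^r}})$ sits inside $F(\sqrt[p]{\la},\sqrt[p]{\la^\tau})$, and Kummer theory forces
\[
\zeta_{p^r}\equiv\la^a(\la^\tau)^b\pmod{(F^\times)^p}
\]
for some $(a,b)\not\equiv(0,0)\pmod{p}$, contradicting the claim.

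To establish the claim I would distinguish two cases. When $|B|>1$, some rational prime $\ell\mid B$ is different from $p$, and according as $\ell$ splits, is inert, or is ramified in $K$, the valuations of $\la,\la^\tau$ at the primes of $F$ above $\ell$ assemble into a $2\times2$ integer matrix whose determinant is nonzero modulo $p$; since roots of unity have zero valuation at every finite place, this rules out any nontrivial relation $\la^a(\la^\tau)^b\in\mu_F\cdot(F^\times)^p$. When $|B|=1$, both $\la$ and $\la^\tau$ are units of ${\cal O}_K$, which forces $K$ to be real quadratic (otherwise ${\cal O}_K^\times$ is torsion and $\la/\la^\tau$ would be a root of unity); writing $\la=\pm\varepsilon^n$ for a fundamental unit $\varepsilon$ with $p\nmid n$, the required independence follows from an Archimedean analysis at the two real embeddings of $F$. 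The main obstacle is precisely this $|B|=1$ unit case, where no finite valuation supplies any information and the argument rests on the rank-one structure of ${\cal O}_K^\times$ together with non-degeneracy to produce two independent Archimedean constraints.
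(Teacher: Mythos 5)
The paper never actually proves this proposition: it is offered, like Proposition \ref{propQ}, as a ``straightforward application of Kummer's theory'', so you are supplying an argument the authors omitted, and your skeleton --- work over $F=K(\mu_{p^r})$, reduce parts 1--3 to the independence of $\la$ and $\la^\tau$ in $F^\times/\bigl(\mu_F\cdot(F^\times)^p\bigr)$, and obtain part 4 by locating $F(\sqrt[p]{\zeta_{p^r}})$ among the exponent-$p$ Kummer subextensions --- is the right framework. The fatal gap is that your central claim is false in exactly the case you flag as ``the main obstacle''. Since $\la$ is a root of $X^2-AX-B$, one has $\la\la^\tau=-B$; so if $|B|=1$ then $\la\la^\tau=\mp1\in\mu_F$ and the pair $(a,b)=(1,1)$ already lies in the kernel of your map. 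Concretely, $p$ being odd, $-1$ is its own $p^r$-th root, so $F(\sqrt[p^r]{\la^\tau})=F(\sqrt[p^r]{\pm\la^{-1}})=F(\sqrt[p^r]{\la})$: parts 2 and 3 of the proposition fail \emph{as stated} when $|B|=1$, which includes the Fibonacci case $A=B=1$. No Archimedean analysis of the fundamental unit can repair this, because the dependence of $\la$ and $\la^\tau$ modulo roots of unity is genuine; non-degeneracy only guarantees that $\la/\la^\tau$ has infinite order, i.e.\ that $\langle\la,\la^\tau\rangle$ has rank at least $1$ modulo torsion, not rank $2$. (The paper's remark that non-degeneracy is equivalent to $\langle\la,\la^\tau\rangle$ being free of rank $2$ fails for the same reason; the $K=\QQ$ analogue dodges the issue by assuming $\min\{|\la_1|,|\la_2|\}\geq2$, and the honest fix here is to add the corresponding hypothesis $B\neq\pm1$ rather than to claim a proof of the statement as written.)

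Your case $|B|>1$ also does not work as described. If $\ell\mid B$ is inert or ramified in $K$, then $\tau$ fixes every place of $F$ above $\ell$, so $\la$ and $\la^\tau$ have equal valuations there and your $2\times2$ matrix is singular; for split $\ell$ the determinant equals $(2s-e)e$, where $e=v_\ell(B)$ and $s$ is the valuation of $\la$ at one of the two places, and nothing forces this to be prime to $p$. What the finite places genuinely control is $a+b$: applying $\tau$ to a relation $\la^a(\la^\tau)^b\in\mu_F\cdot(F^\times)^p$ and multiplying the two relations gives $(-B)^{a+b}\in\mu_F\cdot(F^\times)^p$, which forces $p\mid a+b$ only when $v_\ell(B)\not\equiv0\bmod p$ for some $\ell$. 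The complementary congruence $p\mid a-b$ must come from showing $\la/\la^\tau\notin\mu_F\cdot(F^\times)^p$, which is strictly stronger than ``$\la/\la^\tau$ is not a root of unity'' and still requires an argument you have not supplied.
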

To estimate the density of the primes in $\Sigma_{p,r}^\prime$, observe that an inert prime 
$\ell$ splits completely in the extension $({\rm I})$ of diagram \eqref{Kext} if and only 
if a Frobenius element 
$\sigma\in\mathop{\rm Frob}_{K(\mu_{p^r},\sqrt[p^r]{\la},\sqrt[p^r]{\la^\tau})/K}(\ell)\subset\Ga$
satisfies the following conditions:
$$
\sigma^2={\rm id}\qquad\text{and}\qquad\sigma_{|K}=\tau.
$$
These conditions define a conjugacy class $C\subset\Ga$ and by \v{C}ebotarev's
theorem we need to estimate its size. The exact sequences of Galois groups
$$
1\longrightarrow\Ga_K\longrightarrow\Ga\longrightarrow\langle\tau\rangle\longrightarrow1
$$
and
$$
1\longrightarrow\Ga_K^\prime\longrightarrow\Ga_K\longrightarrow H
\longrightarrow1
$$
split, so that 
$\Ga\simeq\Ga_K\ltimes\langle\tau\rangle\simeq(\Ga_K^\prime\ltimes H)\ltimes\langle\tau\rangle$. 
The extension $\QQ\subset K(\mu_{p^r})$ is abelian with Galois group isomorphic to 
$G=H\times\langle\tau\rangle$ so that we get $\Ga\simeq\Ga_K^\prime\ltimes G$.
Since $H$ is cyclic (of even order $p^{r-1}(p-1)$) there are $2$ elements of 
order 2 in $G$ restricting to $\tau$ and finally
$$
|C|\leq2|\Ga_K^\prime|=2p^{2r}.
$$
Combining this estimate with Dirichlet's theorem of primes in arithmetic
progression under the independence assumptions we get 
$$
\delta(\Sigma_{p,r}^\prime)\leq\left(\frac{p-1}p\right)\frac{2p^{2r}}{2p^{3r-1}(p-1)}=\frac1{p^r}.
$$
Thus, $\delta(\Sigma_p^\prime)\leq\sum_{r\geq1}p^{-r}=1/(p-1)$ and finally, 
from \eqref{inert} and recalling that the inert primes have density $1/2$,
$$
\delta(\widetilde{{\cal P}}_m^\prime)=\frac12
\prod_{\begin{subarray}{c}p>m\\ p\in\Sigma_0\end{subarray}}\delta(\Sigma_p^\prime)
\prod_{\begin{subarray}{c}p>m\\ p\notin\Sigma_0\end{subarray}}\frac1{p-1}=0.
$$

The analysis of the set ${\cal P}_m-{\cal P}_m^\prime$ follows the same lines of the 
$K=\QQ$ situation in the case of a split prime $\ell$ and we, again, omit the details. 
When $\ell$ is inert the basically trivial observation that $\lambda$ is a $b$-power if and only if 
$\overline\lambda$ is a $b$-power implies at once that
$$
\pi_\ell={\rm ord}_\ell(\overline{M})\qquad
\hbox{if $\ell$ is inert.}
$$
In other words, the set ${\cal P}_m-{\cal P}_m^\prime$ consists only of split primes or primes in $\Sigma_0$ and the heuristic estimate
$$
\delta({\cal P}(\vec v)_m)=0,\hbox{ for all ${\vec v}\neq0$,\qquad if $K$ is quadratic,}
$$
follows.

%%% CHAPTER TWO
\section{Tables}
We implemented the procedure using the Maple 12 package and let it run on a MacBook. 
The tables in this section report some of these computations, done with a cutoff value 
$C_{\rm off}=10000$.

We consider all sequences up to shift-equivalence with positive parameters $A$ and $B$ 
such that $A+B\leq 4$ and non-negative initial values $G_0$ and $G_1$ such that 
$\max\{G_0,G_1\}\leq9$.

Tables 1--6 give the results of applying the procedure in search of pure powers for prime 
exponents $q$ with $3\leq q\leq 17$. Each table shows at the beginning the values $N_q$ 
which depend only on $A$, $B$ and the cutoff value. The tables contain 4 types of entries:
\begin{enumerate}
  \item an entry {\ok} indicates that the procedure outputs the empty set, i.e. that the 
            corresponding sequence does not contain $q$-th powers;
  \item an entry $\{a\}$ indicates that the procedure shows that the only $q$-th powers in the 
            corresponding sequence $\{G_n\}$ can occur only for $n\equiv a\bmod N_q$;
  \item an entry {\te am} indicates that the procedure shows that the only $q$-th powers in the 
            corresponding sequence $\{G_n\}$ can occur only for $n\equiv a\bmod (N_q/m)$;
  \item an entry $m$ indicates that the procedure final output was a set of $m$ different possible 
            classes modulo $N_q$ for indices $n$ with $G_n$ a $q$-th power, not coming from 
            the same class modulo $N_q/m$.
\end{enumerate}

Tables 7--12 list the $q$-power free values $2\leq k\leq 30$ for which the procedure shows 
that the equation \eqref{mainprob} has no solutions. 

\begin{center}
\tablefirsthead{%
      %\hline
      \multicolumn{8}{c}{TABLE 1} \\ 
      \multicolumn{8}{c}{$q$-powers in sequences with $A=1$ and $B=1$} \\
      \multicolumn{8}{c}{} \\ 
      \multicolumn{8}{c}{$N_3=186624$, $N_5=15552000$, $N_7=127008000$,} \\
      \multicolumn{8}{c}{$N_{11}=3841992000$, $N_{13}=43286443200$} \\ 
       \multicolumn{8}{c}{$N_{17}=68235175008000$} \\ 
     % \multicolumn{8}{|c|}{} \\ \hline
     \hline
      $G_0$ & $G_1$ & $q=3$ & $q=5$ & $q=7$ & $q=11$ & $q=13$ & $q=17$ \\ \hline\hline}
\tablehead{%
\hline
\multicolumn{8}{|l|}{Table 1: $q$-powers in sequences with $A=1$, $B=1$}\\
\multicolumn{8}{|l|}{\small\sl (continued from previous page)}\\
      \hline
      $G_0$ & $G_1$ & $q=3$ & $q=5$ & $q=7$ & $q=11$ & $q=13$ & q=17 \\ \hline\hline}
\tabletail{%
      \hline
      \multicolumn{8}{|r|}{\small\sl continued on next page} \\
      \hline}
\tablelasttail{%
      \hline}
%\bottomcaption{$q$-powers in sequences with $A=1$ and $B=1$}
\begin{supertabular}{| c | c || c | c | c | c | c |c |}
0 & 1 & 96 & 42 & 18 &  14 & 20 & 26 \\ \hline
0 & 2 & 24 & 6 & \te02 & 6 & 18 & 6\\  \hline
0 & 3 & \te06 & \te04 & 6 & \te04 & \te02 & \te02 \\ \hline
0 & 4 & 24 & 14 & \te02 & \te06 & 6 & 18\\ \hline
0 & 5 & \te04 & \te04 &  \te04 &  6 & \te02 & 18 \\ \hline
0 & 6 & \te06 & \te02 & 12 & \te02 & \te02 &\te02 \\ \hline
0 & 7 & 18 & 6 & \te04 & 6 & \te02 & \te02 \\ \hline
0 & 8 & 96  & \te04 & \te02 & \te02 & \te02 & 10 \\  \hline
0 & 9 & 22  & 6  & 6 & 6 & \te02 & \te02 \\ \hline
1 & 3 & 48 & 8 & 8 & 16 & 8 & 32 \\ \hline
1 & 4 & 62 & 4 & 4 & \te02 & 4 & 4 \\ \hline
1 & 5 & 30 & 4 & \te02 & 4 & 4 & 12\\ \hline
1 & 6 & 4 & \te02 & \te04 & 8 & 4 & 6 \\ \hline
1 & 7 & 20 & \te02 & \te02 & 48 & \te02 & 12\\ \hline
1 & 8 & 64 & 4 & \te02 & \te02 & 4 & 32\\ \hline
1 & 9 & 32 & \te02 & \te02 & 8 & 4 & 48 \\ \hline
2 & 5 & 62 & 4 & 4 & \te{-2}2 & 4 & 4\\ \hline
2 & 6 & 24 & \ok & \ok & \ok & \ok & \ok \\ \hline
2 & 7 & \te{-3}2 & \ok  & \te{-9}2 & \ok & \ok & \ok \\ \hline
2 & 8 & \te{1}2 & \ok & \ok &  \ok & \ok & \ok \\ \hline
2 & 9 & \ok & \ok & \ok & \ok & \ok & \ok \\ \hline
3 & 7 & 30 & 4 & \te{-2}2 & 4 & 4 & 12\\ \hline
3 & 8 & \te12 & \ok & \te72  & \ok & \ok & \ok\\ \hline
3 & 9 & \ok  & \ok & \ok & \ok  & \ok  & \ok\\ \hline
4 & 9 & 4 & \te{-2}2 & \te{-2}4 & 8 & 4 & 6\\ \hline
6 & 4 & \te{-2}2 & \ok & \ok  & \ok & \ok & \ok\\ \hline
6 & 5 & 20 & \te{-1}2 & \te{-1}2 & 48 & \te{-1}2 & 12 \\ \hline
7 & 3 & \ok & \ok  & \ok  & \ok  & \ok & \ok \\ \hline
7 & 4 & \ok & \ok  & \ok  & \ok  & \ok & \ok  \\ \hline
7 & 5 & \ok &  \ok & \ok  & \ok  & \ok & \ok  \\ \hline
8 & 2 & 68 & \ok & \ok & \ok & \ok & \ok \\ \hline
8 & 3 & 52 & \ok & \ok & \ok & \ok & \ok  \\ \hline
8 & 4 & 40 & \ok & \ok & \ok & \ok & \ok \\ \hline
8 & 5 & 52 & \ok & \ok & \ok & \ok & \ok \\ \hline
8 & 6 & 68 & \ok & \ok & \ok & \ok & \ok \\ \hline
8 & 7 & 32 & \te{-1}2  & \te{-1}2 & 8 & 4 & 48 \\ \hline
9 & 1 & 44 & 16 & \te12 & 18 & 4 & 4 \\ \hline
9 & 2 & \ok & \ok & \ok & \ok & \ok & \ok \\ \hline
9 & 3 & 16 & 4 & \ok & \ok & \ok  & \ok \\ \hline
9 & 4 & \ok & \ok & \ok & \ok & \ok & \ok \\ \hline
9 & 5 & \ok & \ok & \ok & \ok & \ok & \ok \\ \hline
9 & 6 & 16 & 4 & \ok & \ok & \ok & \ok \\ \hline
9 & 7 & \ok & \ok & \ok & \ok & \ok & \ok \\ \hline
9 & 8 & 44 & 16 & \te{-1}2 & 18 & 4 & 4 \\ \hline
\end{supertabular}
\end{center}

\bigskip
\begin{center}
\tablefirsthead{%
     %\hline
      \multicolumn{8}{c}{TABLE 2} \\ 
      \multicolumn{8}{c}{$q$-powers in sequences with $A=1$ and $B=2$} \\
      \multicolumn{8}{c}{} \\ 
      \multicolumn{8}{c}{$N_3=31104$, $N_5=7776000$, $N_7=111132000$,} \\
      \multicolumn{8}{c}{$N_{11}=295833384000$, $N_{13}=86572886400$} \\ 
      \multicolumn{8}{c}{$N_{17}=393664471200$} \\ 
      %\multicolumn{8}{|c|}{} \\ \hline
      \hline
      $G_0$ & $G_1$ & $q=3$ & $q=5$ & $q=7$ & $q=11$ & $q=13$ & $p=17$ \\ \hline\hline}
\tablehead{%
\hline
\multicolumn{8}{|l|}{Table 2: $q$-powers in sequences with $A=1$, $B=2$}\\
\multicolumn{8}{|l|}{\small\sl (continued from previous page)}\\
      \hline
      $G_0$ & $G_1$ & $q=3$ & $q=5$ & $q=7$ & $q=11$ & $q=13$ & $q=17$ \\ \hline\hline}
\tabletail{%
      \hline
      \multicolumn{8}{|r|}{\small\sl continued on next page} \\
      \hline}
\tablelasttail{%
      \hline}
%\bottomcaption{$p$-powers in sequences with $A=1$ and $B=2$}
\begin{supertabular}{| c | c || c | c | c | c | c | c |}
0 & 1 & 4 & 8 & 5 & 15 & 6 & 3 \\ \hline
0 & 2 & 2 & 2 & 4 & 17 & 4 & 3\\ \hline
0 & 3 & \{0\} & \{0\} & \{0\} & 15 & \{0\} & \{0\} \\ \hline
0 & 4 & 2 & 6 & 4 & 5 & 13 & 2\\ \hline
0 & 5 & \te02 & \{0\} & 3 & 3 & \{0\} & \{0\} \\ \hline
0 & 6 & 3 & 5 & \{0\} & 3 & \{0\} & 9\\ \hline
0 & 7 & \te{0}2 & \{0\} & \{0\} & 3 & \{0\} & 3 \\ \hline
0 & 8 & 3 & \{0\} & 3 & 3 & 3 & 3\\ \hline
0 & 9 & 5 & \{0\} & \{0\} & \{0\} & \{0\} & \{0\} \\ \hline
1 & 4 & 4 & \{0\} & 2 & 2 & 3 & \{0\}\\ \hline
1 & 5 & \{0\} & \{0\} & 3 & 8 & \{0\} & 4\\ \hline
1 & 6 & 2 & 3 & 2 & 3 & 6 & 8\\ \hline
1 & 7 & 5 & 3 & 6 & 4 & 6 & 4\\ \hline
1 & 8 & 6 & 3 & \{0\} & 4 & 2 & 6 \\ \hline
1 & 9 & \{0\} & \{0\} & 4 & 4 & \{0\} & 4 \\ \hline
2 & 3 & 2 & \ok & \ok & \ok & \ok & \ok \\ \hline
2 & 5 & \ok & \ok & \ok & \ok & \ok & \ok \\ \hline
2 & 7 & \ok & \ok & \ok & \ok & \ok & \ok \\ \hline
2 & 8 & \te12 & \ok & \ok & \ok & \ok & \ok \\ \hline
2 & 9 & \ok & \ok & \ok & \ok  & \ok & \ok \\ \hline
3 & 4 & \ok & \ok & \ok & \ok & \ok & \ok \\ \hline
3 & 6 & \ok & \ok & \ok & \ok & \ok & \ok \\ \hline
3 & 8 & \{1\} & \ok & \ok & \ok & \ok  & \ok \\ \hline
4 & 2 & \{-1\} & \{-1\} & 3 & 8 &  \{-1\} & 4 \\ \hline
4 & 3 & \ok & \ok  & \ok & \ok & \ok & \ok \\ \hline
4 & 5 & \ok & \ok & \ok  & \ok & \ok & \ok \\ \hline
4 & 7 & \{-3\} & \ok & \ok & \ok & \ok & \ok \\ \hline
4 & 9 & \ok & \ok & \ok & \ok & \ok & \ok \\ \hline
5 & 0 & \{1\} & \{1\} & 3 & 9 & \{1\} & \{1\} \\ \hline
5 & 1 & \te12 & \{1\} & 4 & 2 & 2 & \{1\} \\ \hline
5 & 2 & \ok  & \ok & \ok & \ok & \ok & \ok \\ \hline
5 & 3 & \{-1\} & 3 & 2 & 3 & 6 & 8 \\ \hline
5 & 4 & \ok & \ok & \ok & \ok & \ok & \ok \\ \hline
5 & 6 & \ok & \ok & \ok & \ok & \ok & \ok \\ \hline
5 & 8 & 4 & \ok & \ok & \ok & \ok & \ok \\ \hline
6 & 0  & \te12 & 3 & 3 & 3 & \te13 & \te12 \\ \hline
6 & 1 & 2 & \{1\} & \te13 & \{1\} & \{1\} & 6 \\ \hline
6 & 2 & \{-6\} & \ok & \ok & \ok & \ok & \ok \\ \hline
6 & 3 & \ok & \ok & \ok & \ok & \ok & \ok \\ \hline
6 & 4 & 2 & 2 & 4 & 2 & 3 & 3\\ \hline
6 & 5 & \{3\} & \ok & \ok & \ok & \ok & \ok \\ \hline
6 & 7 & \ok & \ok & \ok & \ok & \ok & \ok \\ \hline
6 & 9 & \ok & \ok & \ok & \ok & \ok & \ok \\ \hline
7 & 0 & \{1\} & \te12 & 3 & \{1\} & \{1\} & \{1\} \\ \hline
7 & 1 & \te12 & \{1\} & \{1\} & 2 & 6 & 2\\ \hline
7 & 2 & \ok & \ok & \ok & \ok & \ok & \ok \\ \hline
7 & 3 & \ok & \ok & \ok & \ok & \ok & \ok \\ \hline
7 & 4 & \ok & \ok & \ok & \ok & \ok & \ok \\ \hline
7 & 5 & 3 & 3 & \{-1\} & 4 & 2 & 6\\ \hline
7 & 6 & \ok & \{3\} & \ok & \ok & \ok & \ok \\ \hline
7 & 8 & \te12 & \ok & \ok & \ok & \ok & \ok \\ \hline
8 & 0 & 2 & \{1\} & 9 & 27 & 3 & \{1\} \\ \hline
8 & 1 & 4 & \{1\} & \{1\} & 2 & 2 & \{1\} \\ \hline
8 & 2 & \{0\} & \ok  & \ok & \ok & \ok & \ok \\ \hline
8 & 3 & 2 & \ok & \ok & \ok & \ok & \ok \\ \hline
8 & 4 & 2 & \ok & \ok & \ok & \ok & \ok \\ \hline
8 & 5 & 2 & \ok & \ok & \ok & \ok & \ok \\ \hline
8 & 6 & 2 & \{-1\} & 4 & 4 & \{-1\} & 4 \\ \hline
8 & 7 & 2 & \ok & \ok & \ok & \ok & \ok \\ \hline
9 & 0 & \te12  & \te12 &  \{1\} & \{1\} & \{1\} & \{1\} \\ \hline
9 & 1 & \{1\} & 2 & 2 & \{1\} & 2 & 2 \\ \hline
9 & 2 & \{4\} & \ok & \ok & \ok & \ok & \ok \\ \hline
9 & 3 & \{3\} & \ok & \ok & \ok & \ok & \ok \\ \hline
9 & 4 & \ok & \ok & \ok & \ok & \ok & \ok \\ \hline
9 & 5 & \ok & \ok & \ok & \ok & \ok & \ok \\ \hline
9 & 6 & \{-3\} & \ok & \ok & \ok & \ok & \ok \\ \hline
9 & 7 & \{-1\} & 2 & 2 & 6 & 2 & 2\\ \hline
9 & 8 & \te12 & \ok & \ok & \ok & \ok & \ok \\ \hline
\end{supertabular}
\end{center}

\bigskip
\begin{center}
\tablefirsthead{%
      %\hline
      \multicolumn{8}{c}{TABLE 3} \\ 
      \multicolumn{8}{c}{$q$-powers in sequences with $A=2$ and $B=1$} \\
      \multicolumn{8}{c}{} \\ 
      \multicolumn{8}{c}{$N_3=41472$, $N_5=15552000$, $N_7=74088000$} \\
      \multicolumn{8}{c}{$N_{11}=12074832000$, $N_{13}=519437318400$} \\ 
      \multicolumn{8}{c}{$N_{17}=787328942400$} \\ 
     % \multicolumn{8}{|c|}{} \\ \hline
     \hline
      $G_0$ & $G_1$ & $q=3$ & $q=5$ & $q=7$ & $q=11$ & $q=13$ & $q=17$ \\ \hline\hline}
\tablehead{%
\hline
\multicolumn{8}{|l|}{Table 3: $p$-powers in sequences with $A=2$, $B=1$}\\
\multicolumn{8}{|l|}{\small\sl (continued from previous page)}\\
      \hline
      $G_0$ & $G_1$ & $q=3$ & $q=5$ & $q=7$ & $q=11$ & $q=13$ & $q=17$ \\ \hline\hline}
      \tabletail{%
      \hline
      \multicolumn{8}{|r|}{\small\sl continued on next page} \\
      \hline}
\tablelasttail{%
      \hline}
%\bottomcaption{$p$-powers in sequences with $A=2$ and $B=1$}
\begin{supertabular}{| c | c || c | c | c | c | c | c |}
0 & 1 & 6 & 26 & 26 & 14 & 38 & 14\\ \hline
0 & 2 & \te02 & 18 & \te02 & 6 & 6 & 6\\ \hline
0 & 3 & \te06 & \te04 & 10 & 6 & 6 & 10 \\ \hline
0 & 4 & 12 & 6 & \te02 & 6 & 6 & 6\\ \hline
0 & 5 & \te02 & 12 & 6 & \te02 & \te02 & 18\\ \hline
0 & 6 & 6 & \te02 & \te02 & \te02 & \te02 & 6\\ \hline
0 & 7 & \te02 & 6 & \te02 & 18 & \te02& \te02 \\ \hline
0 & 8 & 6 & 24 & \te06 & 6 & 6 & 6\\ \hline
0 & 9 & \te02 & 6 & 10 & 6 & \te02 & \te02\\ \hline
1 & 1 & 8 & 44 & 16 & 8 & 84 & 8\\ \hline
1 & 2 & 6 & 26 & 26 & 14 & 38 & 14\\ \hline
1 & 3 & 8 & 44 & 16 & 8 & 84 & 8\\ \hline
1 & 4 & 8 & 6 & 6 & 6 & \te02 & 4 \\ \hline
1 & 5 & 12 & \te02 & 4 & 4 & 4 & \te02\\ \hline
1 & 6 & \te02 & 10 & 6 & 12 & 6 & 4\\ \hline
1 & 7 & 4 & 8 & \te02 & 4 & 12 & 8\\ \hline
1 & 8 & 6 & 4 & 12 & 8 & 4 & 8\\ \hline
1 & 9 & \te02 & 8 & \te02 & 6 & 4 & \te02 \\ \hline
2 & 2 & \ok & \ok & \ok & \ok & \ok & \ok \\ \hline
2 & 3 & 8 & 6 & 6 & 6 & \te{-1}2 & 4 \\ \hline
2 & 7 & \ok & \ok & \ok & \ok & \ok & \ok \\ \hline
2 & 8 & \te12 &\ok & \ok  & \ok & \ok & \ok \\ \hline
2 & 9 & \te{-2}8 & \ok & \ok & \ok & \ok & \ok \\ \hline
3 & 3 & \ok & \ok & \ok & \ok & \ok & \ok \\ \hline
3 & 4 & \ok & \ok & \ok & \ok & \ok & \ok \\ \hline
3 & 5 & 12 & \te{-1}2 & 7 & 4 & 4 & \te{-1}2 \\ \hline
3 & 9 & \ok & \ok & \ok & \ok & \ok & \ok \\ \hline
4 & 3 & \ok & \ok & \ok & \ok & \ok & \ok \\ \hline
4 & 4 & \ok & \ok & \ok & \ok & \ok & \ok \\ \hline
4 & 5 & \ok & \ok & \ok & \ok & \ok & \ok \\ \hline
4 & 6 & \te{-2}2 & \ok & \ok & \ok & \ok & \ok \\ \hline
4 & 7 & \te{-1}2 & 10 & 6 & 12 & 6 & 4\\ \hline
5 & 3 & \ok & \ok &  \ok &  \ok  &  \ok & \ok  \\ \hline
5 & 4 & \ok & \ok &  \ok  &  \ok  &  \ok  & \ok \\ \hline
5 & 5 & \ok & \ok &  \ok  &  \ok  &  \ok & \ok  \\ \hline
5 & 6 & \ok & \ok &  \ok  &  \ok  &  \ok & \ok  \\ \hline
5 & 7 & \ok & \ok  & \ok &  \ok  &  \ok & \ok \\ \hline
5 & 8 & \te18 & \ok & \ok &  \ok  & \ok & \ok  \\ \hline
5 & 9 & 4 & 4 & 8 & \te{-1}2 & 4 & 8 \\ \hline
6 & 2 & \ok & \ok & \ok & \ok & \ok  & \ok \\ \hline
6 & 3 & 8 & \ok & \ok & \ok & \ok & \ok \\ \hline
6 & 4 & \te{-1}4 & \te32 & \ok & \ok  & \ok & \ok \\ \hline
6 & 5 & \ok & \ok & \ok & \ok & \ok & \ok \\ \hline
6 & 6 & \ok & \ok & \ok & \ok & \ok & \ok \\ \hline
6 & 7 & \ok & \ok & \ok & \ok & \ok & \ok \\ \hline
6 & 8 & \te14 & \te{-3}2 & \ok & \ok & \ok & \ok \\ \hline
6 & 9 & 8 & \ok & \ok & \ok & \ok & \ok \\ \hline
7 & 2 & \te62 & \ok & \ok & \ok & \ok & \ok \\ \hline
7 & 3 & \ok & \ok & \ok & \ok & \ok & \ok \\ \hline
7 & 4 & 8 & \ok & \ok & \ok & \ok & \ok \\ \hline
7 & 5 & \ok & \ok & \ok & \ok & \ok & \ok \\ \hline
7 & 6 & \te{-1}2 & \ok & \ok & \ok & \ok & \ok \\ \hline
7 & 7 & \ok & \ok & \ok & \ok & \ok & \ok \\ \hline
7 & 8 & \te12 & \ok & \ok & \ok & \ok & \ok \\ \hline
7 & 9 & \ok & \ok & \ok & \ok & \ok & \ok \\ \hline
8 & 1 & 12 & 4 & \te12 & 6 & \te12 & \te12 \\ \hline
8 & 2 & 10 & \ok & \ok & \ok & \ok & \ok \\ \hline
8 & 3 & 6 & \ok & \ok & \ok & \ok & \ok \\ \hline
8 & 4 & 16 & 4 & \ok & \ok & \ok & \ok \\ \hline
8 & 5 & \te02 & \ok & \ok & \ok & \ok & \ok \\ \hline
8 & 6 & \te02 & \ok & \ok & \ok & \ok & \ok \\ \hline
8 & 7 & \te02 & \ok & \ok & \ok & \ok & \ok \\ \hline
8 & 8 & 8 & \ok & \ok & \ok & \ok & \ok \\ \hline
8 & 9 & \te02 & \ok & \ok & \ok & \ok & \ok  \\ \hline
9 & 1 & 6 & \te12 & 16 & 4 & \te12 & 12\\ \hline
9 & 2 & \ok & \ok & \ok & \ok & \ok & \ok \\ \hline
9 & 3 & \ok & \ok & \ok & \ok & \ok & \ok \\ \hline
9 & 4 & \ok & \ok & \ok & \ok & \ok & \ok \\ \hline
9 & 5 & \ok & \ok & \ok & \ok & \ok & \ok \\ \hline
9 & 6 & \ok & \ok & \ok & \ok & \ok & \ok \\ \hline
9 & 7 & \ok & \ok & \ok & \ok & \ok & \ok \\ \hline
9 & 8 & \te12 & \ok & \ok & \ok & \ok & \ok \\ \hline
9 & 9 & 8 & \ok & \ok & \ok & \ok & \ok \\ \hline
\end{supertabular}
\end{center}

\bigskip
\begin{center}
\tablefirsthead{%
      %\hline
      \multicolumn{8}{c}{TABLE 4} \\ 
      \multicolumn{8}{c}{$q$-powers in sequences with $A=1$ and $B=3$} \\
      \multicolumn{8}{c}{} \\ 
      \multicolumn{8}{c}{$N_3=46656$, $N_5=3888000$, $N_7=296335200$} \\
      \multicolumn{8}{c}{$N_{11}=658627200$, $N_{13}=865728864000$} \\
      \multicolumn{8}{c}{$N_{17}=257297040000$} \\
     % \multicolumn{8}{c}{} \\ \hline
     \hline
      $G_0$ & $G_1$ & $q=3$ & $q=5$ & $q=7$ & $q=11$ & $q=13$ & $q=17$ \\ \hline\hline}
      \tablehead{%
\hline
\multicolumn{8}{|l|}{Table 4: $q$-powers in sequences with $A=1$, $B=3$}\\
\multicolumn{8}{|l|}{\small\sl (continued from previous page)}\\
      \hline
      $G_0$ & $G_1$ & $q=3$ & $q=5$ & $q=7$ & $q=11$ & $q=13$ & $q=17$ \\ \hline\hline}
      \tabletail{%
      \hline
      \multicolumn{8}{|r|}{\small\sl continued on next page} \\
      \hline}
\tablelasttail{%
      \hline}
%\bottomcaption{$p$-powers in sequences with $A=1$ and $B=3$}
\begin{supertabular}{| c | c || c | c | c | c | c | c |}
0 & 1 & 105 & 4 & 20 & 8 & 21& 16 \\ \hline
0 & 2 & 69 & \{0\} & \{0\} & 3 & \{0\} & 6 \\ \hline
0 & 3 & 69 & 6 & 6 & 17 & 5 & 8 \\ \hline
0 & 4 & 21 & \{0\} & \{0\} & 3 & 3 & \te02\\ \hline
0 & 5 & 81 & 3 & 6 & 3 & \{0\} & \te02 \\ \hline
0 & 6 & 63 & 12 & \te02 & \{0\} & \{0\} & \te02\\ \hline
0 & 7 & 144 & 3 & \{0\} & 3 & \{0\} & \te02\\ \hline
0 & 8 & 105 & 5 & \te06 & 3 & 9 & 18 \\ \hline
0 & 9 & 63 & 2 & 9 & 12 & 17 & 16 \\ \hline
1 & 2 & 75 & 4 & 2 & 4 & 16 &\te02 \\ \hline
1 & 3 & 12 & \{0\} & 2 & 12 & 2 & \te04 \\ \hline
1 & 4 & 105 & 4 & 20 & 8 & 21 & 16 \\ \hline
1 & 5 & 51 & 2 & 4 & 16 & 3 & 4\\ \hline
1 & 6 & 75  & \te02 & 24 & 10 & 6 & 4\\ \hline
1 & 7 & 54 & \{0\} & \{0\} & 3 & 2 & \te02 \\ \hline
1 & 8 & 69 & \te02 & 4 & 4 & 2 & \te02 \\ \hline
1 & 9 & 39 & \{0\} & 2 & 8 & \{0\} & 4 \\ \hline
2 & 3 & 6 & \ok & \ok & \ok & \ok & \ok \\ \hline
2 & 4 & 15 & \ok  & \ok & \ok & \ok & \ok  \\ \hline
2 & 6 & 18 & \ok  & \ok & \ok & \ok & \ok \\ \hline
2 & 7 & 60 & \ok & \ok & \ok & \ok & \ok \\ \hline
2 & 9 & 48 & \ok & \ok & \ok & \ok & \ok \\ \hline
3 & 1 & 33 & \{1\} & 2 & 9 & 2 & \te14 \\ \hline
3 & 2 & \ok & \ok & \ok & \ok & \ok & \ok \\ \hline
3 & 4 & 93 & \ok & \ok & \ok & \ok& \ok  \\ \hline
3 & 5 & 30 & \ok & \ok & \ok & \ok & \ok \\ \hline
3 & 7 & 12 & \ok & \ok & \ok & \ok & \ok \\ \hline
3 & 8 & 27 & 3 & \ok & \ok & \ok & \ok \\ \hline
4 & 0 & \te13 & \te14 & 3 & 7 & \{1\} & \te12 \\ \hline
4 & 1 & 99 & 4 & 5 & 18 & 5 & 8\\ \hline 
4 & 2 & 66 & \ok & \ok & \ok & \ok & \ok \\ \hline
4 & 3 & 24 & \ok & \ok & \ok & \ok & \ok \\ \hline
4 & 5 & 30 & \{3\} & \ok & \ok & \ok & \ok \\ \hline
4 & 6 & 36 & \ok & \ok & \ok & \ok & \ok \\ \hline
4 & 8 & 30 & \ok & \ok & \ok & \ok & \ok \\ \hline
4 & 9 & 72 & \ok & \ok & \ok & \ok & \ok \\ \hline
5 & 0 & 42 & \te14 & 3 & \{1\} & \{1\} & \te12 \\ \hline
5 & 1 & 9 & 2 & 2 & 6 & 12 & 8 \\ \hline
5 & 2 & 87 & 7 & 28 & 26 & 24 & 10\\ \hline
5 & 3 & 12 & \ok & \ok & \ok & \ok & \ok \\ \hline
5 & 4 & 6 & \ok & \ok & \ok & \ok & \ok \\ \hline
5 & 6 & 39 & \ok & \ok & \ok & \ok & \ok  \\ \hline
5 & 7 & 63 & \ok & \ok & \ok & \ok & \ok \\ \hline
5 & 9 & 96 & \ok & \ok & \ok & \ok & \ok \\ \hline
6 & 0 & 108 & \{1\} & \{1\} & \{1\}  & 3 & \te12\\ \hline
6 & 1 & 33 & \{1\} & 8 & 10 & 6 & \te12\\ \hline
6 & 2 & \ok & \ok & \ok & \ok & \ok & \ok \\ \hline
6 & 3 & 33 & \{-1\} & \{-1\} & 3 & 2 & \te{-1}2 \\ \hline
6 & 4 & 36 & \ok & \ok & \ok & \ok & \ok \\ \hline
6 & 5 & 27 & \ok & \ok & \ok & \ok & \ok \\ \hline
6 & 7 & 27 & \ok & \ok & \ok & \ok & \ok  \\ \hline
6 & 8 & 9 & \ok & 4 & \ok & \ok & \ok  \\ \hline
7 & 0 & 42 & \{1\} & \{1\} & \{1\}  & \{1\} & 6 \\ \hline
7 & 1 & 42 & \{1\} & 2 & 18 & 4 & 4 \\ \hline
7 & 2 & 3 & \ok & \ok & \ok & \ok & \ok \\ \hline
7 & 3 & 12 & \ok & \ok & \ok & \ok & \ok \\ \hline
7 & 4 & 18 & \te{-1}2 & 4 & 4 & 2 & \te{-1}2 \\ \hline
7 & 5 & 18 & \ok & \ok & \ok & \ok & \ok \\ \hline
7 & 6 & 33 & \ok & \ok & \ok & \ok & \ok \\ \hline
7 & 8 & 57 & \ok & \ok & \ok & \ok & \ok  \\ \hline
7 & 9 & 33 & \ok & \ok & \ok & \ok & \ok \\ \hline
8 & 0 & 69 & \te14 & \te13 & 5 & \{1\} & \te12 \\ \hline
8 & 1 & 60 & \{1\} & 5 & 9 & 13 & 4 \\ \hline
8 & 2 & 48 & \{3\} & \ok & \ok & \ok & \ok \\ \hline
8 & 3 & 54 & \ok & \ok & \ok & \ok & \ok \\ \hline
8 & 4 & 177 & \ok & \ok & \ok & \ok & \ok \\ \hline
8 & 5 & 48 & \{-1\} & 2 & 8 & \{-1\} & 4 \\ \hline
8 & 6 & 36 & \ok & \ok & \ok & \ok & \ok \\ \hline
8 & 7 & 45 & \ok & \ok & \ok & \ok& \ok  \\ \hline
8 & 9 & 24 & \ok & \ok & \ok & \ok & \ok \\ \hline
9 & 0 & 105 & \{1\} & 5 & 9 & \{1\} & 6 \\ \hline
9 & 1 & 36 & 6 & \{1\} & 2 & 2 & 8\\ \hline
9 & 2 & 36 & \ok & \ok & \ok & \ok & \ok  \\ \hline
9 & 3 & \ok & \ok & \ok & \ok & \ok & \ok \\ \hline
9 & 4 & 36 & \ok & \ok & \ok & \ok & \ok \\ \hline
9 & 5 & 12 & \{2\} & \ok & \ok & \ok & \ok \\ \hline
9 & 6 & 39 & \{-1\} & 2 & 9 & 2 & \te{-1}4 \\ \hline
9 & 7 & 63 & \ok & \ok & \ok & \ok & \ok \\ \hline
9 & 8 & 30 & \ok & \ok & \ok & \ok & \ok \\ \hline
\end{supertabular}
\end{center}

\bigskip
\begin{center}
\tablefirsthead{%
      %\hline
      \multicolumn{8}{c}{TABLE 5} \\ 
      \multicolumn{8}{c}{$q$-powers in sequences with $A=2$ and $B=2$} \\
      \multicolumn{8}{c}{} \\ 
      \multicolumn{8}{c}{$N_3=62208$, $N_5=7776000$, $N_7=177811200$} \\
      \multicolumn{8}{c}{$N_{11}=59166676800$, $N_{13}=719566848000$} \\
      \multicolumn{8}{c}{$N_{17}=4374049680000$} \\
      %\multicolumn{8}{|c|}{} \\ \hline
      \hline
      $G_0$ & $G_1$ & $q=3$ & $q=5$ & $q=7$ & $q=11$ & $q=13$ & $q=17$ \\ \hline\hline}
      \tablehead{%
\hline
\multicolumn{8}{|l|}{Table 5: $q$-powers in sequences with $A=2$, $B=2$}\\
\multicolumn{8}{|l|}{\small\sl (continued from previous page)}\\
      \hline
      $G_0$ & $G_1$ & $q=3$ & $q=5$ & $q=7$ & $q=11$ & $q=13$ & $q=17$ \\ \hline\hline}
      \tabletail{%
      \hline
      \multicolumn{8}{|r|}{\small\sl continued on next page} \\
      \hline}
\tablelasttail{%
      \hline}
%\bottomcaption{$p$-powers in sequences with $A=2$ and $B=2$}
\begin{supertabular}{| c | c || c | c | c | c | c | c |}
0 & 1 & 32 & 6 & 16 & 18 & 214 & 24\\ \hline
0 & 2 & 28 & 33 & 24 & 60 & 300 & 36 \\ \hline
0 & 3 & \te04 & \{0\} & \te06 & 18 & 54 & 10\\ \hline
0 & 4 & 24 & 5 & \te04 & 54 & 54 & 50\\ \hline
0 & 5 & \te04 & \{0\} & \te02 & 18 & 54 & 6\\ \hline
0 & 6 & \te04 & \te02 & 6 & 18 & 94 & 6 \\ \hline
0 & 7 & \te04 & 5 & \te06 & 86 & 22 & 6 \\ \hline
0 & 8 & 32 & \{0\} & 22 & 18 & 230 & \te02\\ \hline
0 & 9 & \te08 & 5 & 6 & 6 & 26 & 6 \\ \hline
1 & 1 & 32 & 8 & 26 & 60 & 198 & 56 \\ \hline
1 & 3 & 24 & 5 & 10 & 54 & 196 & 30 \\ \hline
1 & 5 & 8 & 6 & 16 & 24 & 148 & 18\\ \hline
1 & 6 & 8 & \{0\} & 8 & 18 & 56 & 8\\ \hline
1 & 7 & 8 & 3 & 36 & 6 & 288 & 12 \\ \hline
1 & 8 & 8 & 2 & 6 & 18 & 106 & 20 \\ \hline
1 & 9 & 8 & \{0\} & \te02 & 12 & 60 & 16 \\ \hline
2 & 2 & 8 & \{-1\} & \te{-1}2 & 12 & 56 & 4\\ \hline
2 & 3 & \te{-5}4 & \ok & \ok & \ok & \ok & \ok \\ \hline
2 & 5 & \ok & \ok & \ok & \ok & \ok & \ok \\ \hline
2 & 7 & \ok & \ok & \ok & \ok & \ok & \ok \\ \hline
2 & 8 & 8 & \{-2\} & \te{-2}2 & 12 & 56 & 4 \\ \hline
2 & 9 & \ok & \ok & \ok & \ok & \ok & \ok \\ \hline
3 & 2 & \ok & \ok & \ok & \ok & \ok & \ok \\ \hline
3 & 3 & \ok & \ok & \ok & \ok & \ok & \ok \\ \hline
3 & 4 & \te{-1}4 & 4 & 8 & 12 & 72 & 12 \\ \hline
3 & 5 & \ok & \ok & \ok & \ok & \ok & \ok \\ \hline
3 & 7 & \te{-2}4 & 3 & 4 & 36 & 164 & 4 \\ \hline
3 & 9 & \te{-2}4 & 3 & \te{-2}2 & 18 & 44 & 10 \\ \hline
4 & 1 & 16 & \{1\} & 8 & 36 & 128 & 4 \\ \hline
4 & 2 & \te52 & \ok & \ok & \ok & \ok & \ok \\ \hline
4 & 3 & 8 & \ok & \ok & \ok & \ok & \ok \\ \hline
4 & 4 & \ok & \ok & \ok & \ok & \ok & \ok \\ \hline
4 & 5 & \ok & \ok & 8 & \ok & \ok & \ok \\ \hline
4 & 6 & 8 & \{-1\} & 8 & 18 & 56 & 8 \\ \hline
4 & 7 & \ok & \ok & \ok & \ok & \ok & \ok \\ \hline
4 & 9 & \ok & \ok & \ok & \ok & \ok & \ok \\ \hline
5 & 0 & \te14 & \te12 & 6 & 18 & 54& 10 \\ \hline
5 & 1 & 8 & 3 & 4 & 24 & 76 & 80\\ \hline
5 & 2 & \ok & 3 & \ok & \ok & \ok & \ok \\ \hline
5 & 3 & \ok & \ok & \ok & \ok & \ok & \ok \\ \hline
5 & 4 & \ok & \ok & \ok & \ok & \ok & \ok \\ \hline
5 & 5 & \ok & \ok & \ok & \ok & \ok & \ok \\ \hline
5 & 6 & \ok & \ok & \ok & \ok & \ok & \ok \\ \hline
5 & 7 & \ok & \ok & \ok & \ok & \ok & \ok \\ \hline
5 & 8 & 16 & 3 & 36 & 6 & 288 & 12 \\ \hline
5 & 9 & \ok & \ok & \ok & \ok & \ok & \ok \\ \hline
6 & 0 & \te14 & \{1\} & 6 & 6 & 78 & 6 \\ \hline
6 & 1 & 16 & 3 & 6 & 24 & 44 & 6 \\ \hline
6 & 2 & 16 & \ok & \ok & \ok & \ok & \ok \\ \hline
6 & 3 & \ok & \ok & \ok & \ok & \ok & \ok \\ \hline
6 & 4 & \ok & \ok & \ok & \ok & \ok & \ok \\ \hline
6 & 5 & 8 & \ok & \ok & \ok & \ok & \ok \\ \hline
6 & 6 & \ok & \ok & \ok & \ok & \ok & \ok \\ \hline
6 & 7 & \ok & \ok & \ok & \ok & \ok & \ok \\ \hline
6 & 8 & \te14 & \ok & \ok & \ok & \ok & \ok \\ \hline
6 & 9 & 8 & \ok & \ok & \ok & \ok & \ok \\ \hline
7 & 0 & \te14 & 3 & 6 & 18 & 138 & 6 \\ \hline
7 & 1 & \te14 & 3 & 12 & 36 & 144 & 40\\ \hline
7 & 2 & \ok & \ok & \ok & \ok & \ok & \ok \\ \hline
7 & 3 & \ok & \ok & \ok & \ok & \ok & \ok \\ \hline
7 & 4 & \ok & \ok & \ok & \ok & \ok & \ok \\ \hline
7 & 5 & te{-2}4 & \ok & \ok & \ok & \ok & \ok \\ \hline
7 & 6 & 8 & \ok & \ok & \ok & \ok & \ok \\ \hline
7 & 7 & \ok & \ok & \ok & \ok & \ok & \ok \\ \hline
7 & 8 & 16 & \ok & \ok & \ok & \ok & \ok \\ \hline
7 & 9 & \ok & \{2\} & \ok & \ok & \ok & \ok \\ \hline
8 & 0 & 28 & 5 & \te12 & 54 & 26 & 18 \\ \hline
8 & 1 & 20 & 2 & 6 & 12 & 58 & 8 \\ \hline
8 & 2 & 8 & \ok & 8 & \ok & \ok & \ok \\ \hline
8 & 3 & \te04 & \ok & \ok & \ok & \ok & \ok \\ \hline
8 & 4 & \te04 & \ok & \ok & \ok & \ok & \ok \\ \hline
8 & 5 & \te04 & \ok & \ok & \ok & \ok & \ok \\ \hline
8 & 6 & \te04 & \ok & \ok & \ok & \ok & \ok \\ \hline
8 & 7 & 12 & \ok & \ok & \ok & \ok & \ok \\ \hline
8 & 8 & 32 & 3 & \ok & \ok & \ok & \ok \\ \hline
8 & 9 & 12 & \ok & \ok & \ok & \ok & \ok \\ \hline
9 & 0 & \te14 & 3 & 12 & 54 & 46 & 10 \\ \hline
9 & 1 & \te1{12} & 10 & 12 & 12 & 80 & 64 \\ \hline
9 & 2 & \te{-1}4 & \ok & \ok & \ok & \ok & \ok \\ \hline
9 & 3 & \ok & \ok & \ok & \ok & \ok & \ok \\ \hline
9 & 4 & \ok & \ok & \ok & \ok & \ok & \ok \\ \hline
9 & 5 & \ok & \ok & \ok & \ok & \ok & \ok \\ \hline
9 & 6 & \ok & \ok & \ok & \ok & \ok & \ok \\ \hline
9 & 7 & \ok & \{2\} & \ok & \ok & \ok & \ok \\ \hline
9 & 8 & \te14 & \ok & \ok & \ok & \ok & \ok \\ \hline
9 & 9 & \ok & \ok & \ok & \ok & \ok & \ok \\ \hline
\end{supertabular}
\end{center}

\bigskip
\begin{center}
\tablefirsthead{%
      %\hline
      \multicolumn{8}{c}{TABLE 6} \\ 
      \multicolumn{8}{c}{$q$-powers in sequences with $A=3$ and $B=1$} \\
      \multicolumn{8}{c}{} \\ 
      \multicolumn{8}{c}{$N_3=93312$, $N_5=15552000$, $N_7=148176000$} \\
      \multicolumn{8}{c}{$N_{11}=46103904000$, $N_{13}=432864432000$} \\                     
      \multicolumn{8}{c}{$N_{17}=102918816000$}  \\
      %\multicolumn{8}{c}{} \\ \hline
      \hline
      $G_0$ & $G_1$ & $q=3$ & $q=5$ & $q=7$ & $q=11$ & $q=13$ & $q=17$ \\ \hline\hline}
\tablehead{%
\hline
\multicolumn{8}{|l|}{Table 6: $q$-powers in sequences with $A=3$, $B=1$}\\
\multicolumn{8}{|l|}{\small\sl (continued from previous page)}\\
      \hline
      $G_0$ & $G_1$ & $q=3$ & $q=5$ & $q=7$ & $q=11$ & $q=13$ & $q=17$ \\ \hline\hline}
\tabletail{%
      \hline
      \multicolumn{8}{|r|}{\small\sl continued on next page} \\
      \hline}
\tablelasttail{%
      \hline}
%\bottomcaption{$p$-powers in sequences with $A=3$ and $B=1$}
\begin{supertabular}{| c | c || c | c | c | c | c | c |}
0 & 1 & 70 & 10 & 6 & 6 & 10 & 6 \\ \hline
0 & 2 & 22 & \te02 & \te02 & \te02 & \te02 & 6\\ \hline
0 & 3 & 22 & \te02 & 30 & 6 & \te02 & \te02 \\ \hline
0 & 4 & 44 & \te02 & \te02 & 36 & \te02 & \te02 \\ \hline
0 & 5 & 22 & \te02 & 36 & \te02 & \te02 & \te02 \\ \hline
0 & 6 & 44 & \te02 & 12 & \te02 & \te02 & \te02 \\ \hline
0 & 7 & 22 & \te02 & 6 & 12 & \te02 & \te02 \\ \hline
0 & 8 & 70 & \te04 & \te02 & 6 & 6 & 6 \\ \hline
0 & 9 & 180 & \te04 & 10 & 6 & \te02 & 6 \\ \hline
1 & 1 & 44 & 6 & 6 & 92 & 10 & 6 \\ \hline
1 & 2 & 44 & 6 & 6 & 92 & 10 & 6 \\ \hline
1 & 5 & 24 & 4 & 16 & 24 & 32 & 4 \\ \hline
1 & 6 & 62 & \te02 & 16 & 8 & 6 & \te02 \\ \hline
1 & 7 & 80 & \te02 & 48 & 6 & 12 & 4 \\ \hline
1 & 8 & 114 & 4 & \te02 & 6 & 4 & \te04 \\ \hline
1 & 9 & 4 & \te02 & 16 & 18 & 4 & \te02 \\ \hline
2 & 2 & 18 & \ok & \ok & \ok & \ok & \ok \\ \hline
2 & 3 & \ok & \ok & \ok & \ok & \ok & \ok \\ \hline
2 & 4 & 18 & \ok & \ok & \ok & \ok & \ok \\ \hline
2 & 5 & 24 & 4 & 16 & 24 & 32 & 4 \\ \hline
2 & 9 & \ok & \ok & \ok & \ok & \ok & \ok \\ \hline
3 & 3 & \ok & \ok & \ok & \ok & \ok & \ok \\ \hline
3 & 4 & \ok & \ok & \ok & \ok & \ok & \ok \\ \hline
3 & 5 & \ok & \ok & \ok & \ok & \ok & \ok \\ \hline
3 & 6 & \ok & \ok & \ok & \ok & \ok & \ok \\ \hline
3 & 7 & \ok & \ok & \ok & \ok & \ok & \ok \\ \hline
3 & 8 & 62 & \te{-1}2 & 16 & 8 & 6 & \te{-1}2 \\ \hline
4 & 2 & \ok & \te32 & \ok & \ok & \ok & \ok \\ \hline
4 & 3 & \ok & \ok & \ok & \ok & \ok & \ok \\ \hline
4 & 4 & 28 & \ok & \ok & \ok & \ok & \ok \\ \hline
4 & 5 & \ok & \ok & \ok & \ok & \ok & \ok \\ \hline
4 & 6 & \ok & \ok & \ok & \ok & \ok & \ok \\ \hline
4 & 7 & \ok & \ok & \ok & \ok & \ok & \ok \\ \hline
4 & 8 & 28 & \ok & \ok & \ok & \ok & \ok \\ \hline
4 & 9 & \ok & \ok & \ok & \ok & \ok & \ok \\ \hline
5 & 2 & \ok & \ok & \ok & \ok & \ok & \ok \\ \hline
5 & 3 & \ok & \ok & \ok & \ok & \ok & \ok \\ \hline
5 & 4 & 24 & \ok & \ok & \ok & \ok & \ok \\ \hline
5 & 5 & \ok & \ok & \ok & \ok & \ok & \ok \\ \hline
5 & 6 & \ok & 4 & \ok & \ok & \ok & \ok \\ \hline
5 & 7 & 14 & \ok & \ok & \ok & \ok & \ok \\ \hline
5 & 8 & 14 & \ok & \ok & \ok & \ok & \ok \\ \hline
5 & 9 & \ok & 4 & \ok & \ok & \ok & \ok \\ \hline
6 & 2 & \ok & \ok & \ok & \ok & \ok & \ok \\ \hline
6 & 3 & \ok & \ok & \ok & \ok & \ok & \ok \\ \hline
6 & 4 & \ok & \ok & \ok & \ok & \ok & \ok \\ \hline
6 & 5 & \ok & \ok & \ok & \ok & \ok & \ok \\ \hline
6 & 6 & \ok & \ok & \ok & \ok & \ok & \ok \\ \hline
6 & 7 & 14 & \ok & \te{-3}2 & \ok & \ok & \ok \\ \hline
6 & 8 & 24 & \ok & \ok & \ok & \ok & \ok \\ \hline
6 & 9 & \ok & \ok & \ok & \ok & \ok & \ok \\ \hline
7 & 1 & 36 & \te12 & 8 & 4 & \te12 & \te12 \\ \hline
7 & 2 & 24 & \ok & \ok & \ok & \ok & \ok \\ \hline
7 & 3 & \ok & \ok & \ok & \ok & \ok & \ok \\ \hline
7 & 4 & \ok & \ok & \ok & \ok & \ok & \ok \\ \hline
7 & 5 & \ok & \ok & \ok & \ok & \ok & \ok \\ \hline
7 & 6 & \ok & \ok & \ok & \ok & \ok & \ok \\ \hline
7 & 7 & \ok & \ok & \ok & \ok & \ok & \ok \\ \hline
7 & 8 & 24 & \ok & \ok & \ok & \ok & \ok \\ \hline
7 & 9 & \ok & \ok & \ok & \ok & \ok & \ok \\ \hline
8 & 1 & 60 & 6 & 8 & 8 & 4 & \te12 \\ \hline
8 & 2 & 48 & \ok & \ok & \ok & \ok & \ok \\ \hline
8 & 3 & 44 & \ok & \ok & \ok & \ok & \ok \\ \hline
8 & 4 & 42 & \ok & \ok & \ok & \ok & \ok \\ \hline
8 & 5 & 20 & \ok & \ok & \ok & \ok & \ok \\ \hline
8 & 6 & 32 & \ok & \ok & \ok & \ok & \ok \\ \hline
8 & 7 & 40 & \ok & \ok & \ok & \ok & \ok \\ \hline
8 & 8 & 44 & \te22 & \ok & \ok & \ok & \ok \\ \hline
8 & 9 & 18 & \ok & \ok & \ok & \ok & \ok \\ \hline
9 & 1 & 18 & 4 & 4 & \te12 & \te12 & \te12 \\ \hline
9 & 2 & \ok & \ok & \ok & \ok & \ok & \ok \\ \hline
9 & 3 & \ok & \ok & \ok & \ok & \ok & \ok \\ \hline
9 & 4 & 28 & \ok & \ok & \ok & \ok & \ok \\ \hline
9 & 5 & \ok & \ok & \ok & \ok & \ok & \ok \\ \hline
9 & 6 & 24 & \ok & \ok & \ok & \ok & \ok \\ \hline
9 & 7 & \ok & \ok & \ok & \ok & \ok & \ok \\ \hline
9 & 8 & 32 & \ok & \ok & \ok & \ok & \ok \\ \hline
9 & 9 & \ok & \ok & \ok & \ok & \ok & \ok \\ \hline
\end{supertabular}
\end{center}

%%%%%%%%%%%%%%%%%%%%%%%%%%%%%%%%%%%%%%%%

\bigskip
\begin{center}
\tablefirsthead{%
      \multicolumn{4}{c}{TABLE 7} \\
      \multicolumn{4}{c}{Values of $q$-power free constants $2\leq k\leq30$ for which the equation} \\ 
      \multicolumn{4}{c}{$G_n=kx^q$ has no solutions with $A=1$, $B=1$, and $q=3$, $5$} \\ 
      \multicolumn{4}{c}{}\\ \hline
      $G_0$ & $G_1$ & $q=3$ & \\ 
                    &              & $q=5$ & \\\hline\hline}
\tablehead{%
\hline
\multicolumn{4}{|l|}{Table 7: Impossible values of $k$ in sequences with $A=1$, $B=1$} \\
\multicolumn{4}{|l|}{\small\sl (continued from previous page)} \\
      \hline
      $G_0$ & $G_1$ & q=3 & \\ 
                 &            & q=5 & \\ \hline\hline}
\tabletail{%
      \hline
      \multicolumn{3}{|r|}{\small\sl continued on next page} \\
      \hline}
\tablelasttail{%
      \hline}
%\bottomcaption{$p$-powers in sequences with $A=1$ and $B=1$}
\begin{supertabular}{| c | c | c  l |}
1 & 3 &  & 5, 6, 9, 10, 12--15, 17, 19, 20--23, 26, 30 \\ 
  &  &  & 5, 6, 8--10, 12--15, 16, 17, 19--23, 25--28, 30 \\  \hline
1 & 4 & &  6, 10, 13, 15, 17, 18, 20, 22, 25, 26, 28, 29 \\ 
  & &  &  6, 8, 10, 11, 13, 15--18, 20--22, 25--30 \\ \hline
1 & 5 & & 9, 12--15, 18--20, 22, 23, 26, 29, 30 \\ 
  & &  &  2, 8, 9, 12--16, 18--23, 25, 26, 29, 30  \\ \hline
1 & 6 & & 2, 3, 10--12, 15, 17, 18, 23, 25, 26, 30 \\ 
  & &  &  2, 3, 8, 10, 12, 14--19, 21, 23, 25--30 \\ \hline
1 & 7 &  & 3, 4, 9, 10, 12--14, 17, 18, 21, 22, 25, 26, 28--30 \\
  & &  & 2--4, 9, 10, 12--14, 17--22, 25, 26, 28--30 \\ \hline
1 & 8 & & 2, 3, 5, 10--12, 15, 18, 21, 25, 28--30 \\ 
  & &  &  2--5, 10--12, 14--16, 18, 20--23, 25, 27, 28--30 \\ \hline
1 & 9 & &  4, 5, 11, 13, 14, 17, 18, 20, 21, 23, 25, 26 \\
  & & & 2, 4--6, 11--14, 16--18, 20, 21, 23, 25--28, 30 \\ \hline
2 & 5  & & 6, 10, 13, 15, 17, 18, 20, 22, 25, 26, 28, 29 \\
  & & & 6, 8, 10, 11, 13, 15--18, 20--22, 25--30 \\ \hline
2 & 6 &  & 3, 5, 9, 10--13, 15, 17, 18, 20, 21, 23, 25, 26, 28--30 \\ 
  & & & 3, 5, 7, 9--13, 15--21, 23, 25--27, 28--30 \\ \hline
2 & 7 & & 4, 6, 10, 12, 13--15, 18, 20, 22, 23, 26, 28, 29 \\ 
  & & & 6, 10, 12--15, 17, 18, 20--23, 26--29 \\ \hline
2 & 8 & & 5, 7, 9, 11, 12, 17, 19, 20, 23, 25, 26, 29, 30  \\ 
  & & & 3, 5, 7, 9, 11--13, 15--17, 19--23, 25--27, 29, 30 \\ \hline
2 & 9 & & 4, 6, 10, 14, 15, 18, 21--23, 25, 26, 28, 30 \\ 
  & & & 3, 4, 6, 8, 10, 13--16, 18, 19, 21--23, 25--28, 30  \\ \hline
3 & 7 & & 9, 12--15, 18--20, 22, 23, 26, 29, 30  \\ 
  & & & 2, 8, 9, 12--16, 18, 19--23, 25, 26, 29, 30  \\ \hline
3 & 8 & & 4, 6, 10, 12--15, 18, 20, 22, 23, 26, 28, 29 \\
  & & & 6, 10, 12--15, 17, 18, 20--23, 26--29 \\  \hline
3 & 9 & & 4, 5, 7, 10, 11, 13--15, 17--20, 23, 25, 26, 29, 30   \\ 
  & & & 2, 4, 5, 7, 8, 10, 11, 13--20, 22, 23, 25--30 \\ \hline
4 & 9 & & 2, 3, 10--12, 15, 17, 18, 23, 25, 26, 30 \\ 
  & & & 2, 3, 8, 10, 12, 14--19, 21, 23, 25--30 \\ \hline
6 & 4 & & 5, 7, 9, 11, 12, 17, 19, 20, 23, 25, 26, 29, 30 \\ 
  & & & 3, 5, 7, 9, 11--13, 15--17, 19--23, 25--27, 29, 30 \\ \hline
6 & 5 & & 3, 4, 9, 10, 12--14, 17, 18, 21, 22, 25, 26, 28--30 \\ 
  & & & 2, 4, 3, 9, 10, 12--14, 17--22, 25, 26, 28--30 \\ \hline
7 & 3 & & 2, 6, 9, 12, 14, 17, 18, 20--22, 25, 28--30 \\
  & & & 2, 5, 6, 8, 9, 12, 14, 16--19, 20--22, 25, 27--30 \\ \hline
7 & 4 & & 2, 6, 9, 12, 14, 17, 18, 20--22, 25, 28--30  \\ 
  & & & 2, 5, 6, 8, 9, 12, 14, 16--22, 25, 27--30 \\ \hline
7 & 5 & & 4, 6, 10, 14, 15, 18, 21--23, 25, 26, 28, 30  \\ 
  & & & 3, 4, 6, 8, 10, 13--16, 18, 19, 21--23, 25--28, 30  \\ \hline
8 & 2 & & 3, 5, 13, 15, 17--19, 21, 23, 25, 26, 28--30 \\ 
  & & & 3--5, 7, 9, 11, 13, 15--19, 21, 23, 25--30 \\ \hline
8 & 3 & & 2, 4, 6, 7, 9, 12, 15, 17, 19--23, 26, 28, 30 \\ 
  & & & 4, 6, 7, 9, 10, 12, 15--17, 19, 20--22, 23, 26--30 \\ \hline
8 & 4 & & 3, 5--7, 10, 11, 13, 15, 17--23, 25, 26, 29, 30 \\
  & & & 2, 3, 5--7, 9--11, 13--15, 17--23, 25--27, 29, 30 \\  \hline
8 & 5 & & 2, 4, 6, 7, 9, 12, 15, 17, 19--23, 26, 28, 30  \\ 
 & & & 4, 6, 7, 9, 10, 12, 15--17, 19--23, 26--30 \\ \hline
8 & 6 & & 3, 5, 13, 15, 17--19, 21, 23, 25, 26, 28--30 \\ 
  & & & 3--5, 7, 9, 11, 13, 15--19, 21, 23, 25--30 \\ \hline
8 & 7 & & 4, 5, 11, 13, 14, 17, 18, 20, 21, 23, 25, 26 \\ 
  & & & 2, 4--6, 11--14, 16--18, 20, 21, 23, 25--28, 30 \\ \hline
9 & 1 & & 2, 5--7, 12, 13, 15, 18--20, 23, 26, 28--30 \\ 
  & & & 2--7, 12--16, 18--20, 22, 23, 26--30\\ \hline
9 & 2 & & 4--6, 10, 12, 14, 15, 17, 20, 25, 26, 28--30 \\ 
  & & & 3--6, 8, 10, 12, 14, 15, 17--22, 25--30  \\ \hline
9 & 3 & & 2, 4, 5, 7, 10, 11, 13, 14, 17--20, 22, 23, 25, 26, 28--30 \\ 
  & & & 2, 4, 5, 7, 8, 10, 11, 13, 14, 16--20, 22, 23, 25, 26, 28--30 \\ \hline
9 & 4 & & 3, 6, 7, 10--12, 15, 18, 20, 22, 25, 26, 29 \\ 
  & & & 2, 3, 6, 7, 8, 10--12, 15, 16, 18, 20--23, 25--29 \\ \hline
9 & 5 & & 3, 6, 7, 10--12, 15, 18, 20, 22, 25, 26, 29 \\ 
  & & & 2, 3, 6--8, 10--12, 15, 16, 18, 20--23, 25--29 \\ \hline
9 & 6 & & 2, 4, 5, 7, 10, 11, 13, 14, 17--20, 22, 23, 25, 26, 28--30\\ 
  & & & 2, 4, 5, 7, 8, 10, 11, 13, 14, 16--20, 22, 23, 25, 26, 28--30 \\ \hline
9 & 7 & & 4--6, 10, 12, 14, 15, 17, 20, 25, 26, 28--30 \\
  & & & 3--6, 8, 10, 12, 14, 15, 17--22, 25--30 \\ \hline
9 & 8 & & 2, 5--7, 12, 13, 15, 18--20, 23, 26, 28--30  \\
  & & & 2--7, 12--16, 18--22, 23, 26--30 \\ \hline
\end{supertabular}
\end{center}

\bigskip
\begin{center}
\tablefirsthead{%
      \multicolumn{4}{c}{TABLE 8} \\
      \multicolumn{4}{c}{Values of $q$-power free constants $2\leq k\leq30$ for which the equation} \\ 
      \multicolumn{4}{c}{$G_n=kx^q$ has no solutions with $A=1$, $B=2$, and $q=3$, $5$} \\ 
       \multicolumn{4}{c}{}\\ 
       \hline
      $G_0$ & $G_1$ & $q=3$ & \\ 
                    &              & $q=5$ & \\ \hline\hline}
\tablehead{%
\hline
\multicolumn{4}{|l|}{Table 8: Impossible values of $k$ in sequences with $A=1$, $B=2$} \\
\multicolumn{4}{|l|}{\small\sl (continued from previous page)}\\
      \hline
       $G_0$ & $G_1$ & $q=3$ & \\ 
                    &              & $q=5$ & \\ \hline\hline}
\tabletail{%
      \hline
      \multicolumn{4}{|r|}{\small\sl continued on next page} \\
      \hline}
\tablelasttail{%
      \hline}
%\bottomcaption{$p$-powers in sequences with $A=1$ and $B=2$}
\begin{supertabular}{| c | c | c l |}
1 & 4 & & 3, 5, 9--11, 13, 15, 17--23, 25, 28--30 \\
 & & & 2, 3, 5, 7, 9--13, 15--17, 19--22, 24, 25, 27, 29, 30 \\ \hline
1 & 5 & & 3, 6, 9, 11--15, 18--23, 25, 26, 28--30 \\
 & & & 3, 4, 6, 8--15, 18--27, 29, 30 \\ \hline
1 & 6 & & 2--5, 7, 9--12, 14, 15, 17--19, 21--23, 25, 26, 28--30 \\ 
 & & & 2--5, 7, 9, 10--19, 21--23, 25--30 \\ \hline
1 & 7 & & 5, 6, 10, 11, 15, 17--22, 25, 26, 28--30  \\ 
 & & & 4--6, 8, 10--22, 24--30  \\ \hline
1 & 8 & & 2--7, 9, 11--15, 17, 18, 20--23, 25, 29, 30 \\ 
 & & & 2--7, 9, 11--25, 27--30 \\ \hline
1 & 9 & & 2, 3, 5--7, 10, 13--16, 18--21, 23--26, 28, 30  \\ 
  & & & 2, 3, 5--8, 10, 12--28, 30  \\ \hline
2 & 3 & & 5, 9--12, 14, 15, 17, 19--22, 25, 26, 29, 30 \\ 
 & & & 5, 6, 8, 10--12, 15, 17--23, 25, 26, 28--30  \\ \hline
2 & 5 & & 3, 4, 6, 7, 10, 11, 13--15, 17, 18, 20--23, 25, 28--30 \\ 
 & & & 3, 4, 7, 10--12, 14--17, 18, 21--30 \\ \hline
2 & 7 & & 3--6, 9, 10, 12--15, 17--19, 21--23, 26, 28--30 \\ 
 & & & 3--6, 9, 10, 12--19, 20--24, 27--30 \\ \hline
2 & 8 & & 4--7, 10, 11, 13, 15, 17--23, 25, 26, 29, 30 \\ 
 & & & 5--7, 9--11, 13--15, 17--27, 29, 30 \\ \hline
2 & 9 & & 3--5, 7, 10--12, 14, 15, 18--23, 25, 26, 29, 30  \\ 
 & & & 3--8, 10--12, 14--23, 25--30  \\ \hline
3 & 4 & & 2, 5--7, 9, 11--15, 17, 19--23, 25, 26, 28--30  \\
 & & & 2, 5--9, 11, 13--15, 17, 20--24, 25, 27--30  \\ \hline
3 & 8 & & 4--7, 10--13, 15, 17--19, 21--23, 25, 26, 29 \\
 & & & 2, 4--7, 9--13, 15--24, 26--29 \\ \hline
4 & 2 & & 3, 5--7, 9, 11--13, 15, 18, 19, 21--23, 25, 26, 28--30 \\
 & & & 3, 5--9, 11--13, 15--30  \\ \hline
4 & 3 & & 2, 5--7, 9, 10, 12--15, 19--23, 25, 26, 28--30 \\
 & & & 2, 5--10, 12--15, 18--30 \\ \hline
4 & 5 & & 2, 3, 6, 7, 9--12, 15, 17--22, 25, 26, 28--30 \\
 & & & 2, 3, 6--12, 14, 15, 17--19, 21, 22, 24--28, 30  \\ \hline
4 & 7 & & 2, 3, 5, 6, 9, 11, 13, 17--23, 26, 28, 30 \\ 
 & & & 2, 3, 5, 6, 8--14, 16, 17, 19--28, 30 \\ \hline
4 & 9 & & 2, 3, 5, 10--15, 18, 19, 21--23, 25, 26, 28, 29 \\
 & & & 3, 5--8, 10--14, 16, 18--23, 25--27, 29, 30 \\ \hline
5 & 1 & & 3, 4, 6, 7, 9, 10, 12, 14, 15, 17-- 21, 23, 26, 29, 30  \\ 
 & & & 3, 4, 6--10, 12, 14--30 \\ \hline
5 & 2 & & 3, 4, 6, 7, 10, 11, 13--15, 17, 18, 20--23, 25, 28--30 \\
 & & & 3, 4, 6--11, 13--15, 17--30 \\ \hline
5 & 3 & & 6, 7, 9--12, 14, 15, 17, 21--23, 25, 26, 28--30  \\
  & & & 4, 6--12, 14--18, 20--30 \\ \hline
5 & 4 & & 2, 3, 6, 7, 9--12, 15, 17--21, 23, 25, 26, 28--30  \\
 & & & 2, 3, 6--13, 15, 17--21, 23--30  \\ \hline
5 & 6 & & 3, 9--15, 17, 19--23, 25, 26, 29, 30 \\
 & & & 2--4, 7--15, 17--27, 29, 30 \\ \hline
5 & 8 & & 2, 3, 6, 7, 9--11, 13, 15, 17, 19--23, 25, 26, 28--30 \\ 
 & & & 2, 3, 6, 7, 9--16, 19--29\\ \hline
6 & 1 & & 2--5, 7, 9--12, 14, 17--19, 21--23, 25, 26, 28--30 \\ 
 & & & 2--5, 7--12, 14, 16--30 \\ \hline
6 & 2 & & 5, 7, 9--13, 15, 17, 19-23, 25, 29, 30 \\ 
 & & & 5, 7--13, 15--17, 19--30 \\ \hline
6 & 3 & & 2, 4, 5, 7, 9--11, 13, 14, 17--20, 22, 23, 25, 26, 28, 29 \\ 
 & & & 2, 4, 5, 7--14, 16--20, 22--30 \\ \hline
6 & 4 & & 5, 9--12, 14, 15, 17, 19--22, 25, 26, 29, 30 \\
 & & & 2, 3, 5, 7--15, 17--23, 25--30 \\ \hline
6 & 5 & & 2, 3, 7, 9--14, 18--23, 25, 28--30 \\
 & & & 2--4, 7--15, 18--26, 28--30   \\ \hline
6 & 7 & & 2, 3, 5, 10--15, 17, 18, 20, 21, 23, 25, 26, 28--30 \\ 
 & & & 2--5, 8--15, 17, 18, 20--30  \\ \hline
6 & 9 & & 4, 5, 7, 10, 11, 13--15, 17, 19, 20, 22, 23, 25, 26, 28--30 \\ 
 & & & 2--5, 7, 8, 10--20, 22--26, 28--30 \\ \hline
7 & 1 & & 2, 6, 9--14, 18--23, 25, 26, 28--30 \\
 & & & 2, 6, 8--14, 16, 18--30 \\ \hline
7 & 2 & & 3--6, 9--15, 17--19, 21--23, 25, 26, 28, 30  \\
 & &  & 3--6, 8--15, 17--19, 21--30 \\ \hline
7 & 3 & & 4--6, 9--15, 18--22, 25, 28--30 \\
 & & & 4--6, 8--16, 18--22, 24--30  \\ \hline
7 & 4 & & 2, 3, 5, 6, 9--11, 13--15, 17, 19--22, 25, 28--30 \\
 & & & 2, 3, 5, 6, 8--17, 19--25, 27--30 \\ \hline
7 & 5 & & 2, 3, 6, 9--15, 17, 18, 21, 22, 25, 28, 30 \\
 & & & 2, 3, 6, 8--18, 20--28, 30 \\ \hline
7 & 6 & & 2, 3, 5, 10--15, 18, 19, 21--23, 25, 26, 28, 29 \\
 & & & 2--5, 8--15, 17--19, 21--30  \\ \hline
7 & 8 & & 2, 3, 5, 6, 9--15, 17--21, 23, 25, 28--30 \\
 & & & 2--6, 9--15, 17--21, 23--30  \\ \hline
8 & 1 & & 2--7, 9--15, 18, 20--23, 25, 26, 29, 30 \\
 & & & 2--7, 9--16, 18, 20--30  \\ \hline
8 & 2 & & 4--7, 9, 10, 12--15, 17, 19--21, 23, 25, 26, 28--30 \\
 & & & 4--7, 9--17, 19--21, 23--30  \\ \hline
8 & 3 & & 2, 5--7, 9, 11--15, 17, 18, 21--23, 26, 28-30 \\
 & & & 2, 4-7, 9--18, 20--24, 26--30  \\ \hline
8 & 4 & &  3, 6, 7, 9--15, 17--19, 21--23, 25, 26, 29, 30 \\
 & & & 3, 6, 7, 9--19, 21--27, 29, 30  \\ \hline
8 & 5 & & 2, 3, 6, 7, 9--11, 13--15, 17--20, 22, 23, 26, 28--30  \\ 
 & & & 2--4, 6, 7, 9--20, 22--30  \\ \hline
8 & 6 & & 2--5, 7, 9--15, 17--21, 23, 25, 26, 28, 30 \\
 & & & 2--5, 7, 9--21, 23--30  \\ \hline
8 & 7 & & 2, 3, 5, 6, 9--15, 17, 18, 20--22, 25, 26, 28--30 \\
 & & & 2--6, 9--15, 17--22, 24--30  \\ \hline
9 & 1 & & 2, 3, 5--7, 10--15, 17, 18, 20, 22, 23, 25, 26, 28--30 \\
 & & & 2, 3, 5, 6, 8, 10--18, 20, 22--30 \\ \hline
9 & 2 & & 4--7, 10--13, 15, 17--19, 21--23, 25, 26, 29 \\  
 & & & 3--8, 10--19, 21--23, 25--30 \\ \hline
9 & 3 & & 2, 4, 5, 7, 10, 11, 13--15, 17--19, 20, 22, 23, 25, 26, 28, 29, 30 \\
 & & & 2, 4, 5, 7, 8, 10--20, 22, 23, 24--26, 28--30  \\ \hline
9 & 4 & & 2, 3, 5--7, 10--15, 17--19, 21, 23, 25, 26, 28, 29 \\
 & & & 2, 3, 5--8, 10--21, 23--29  \\ \hline
9 & 5 & & 3, 4, 6, 7, 10--15, 17--22, 25, 26, 28, 29 \\
 & & & 3, 4, 6--8, 10--22, 24--30  \\ \hline
9 & 6 & & 2, 4, 5, 7, 10, 11, 13--15, 17--23, 25, 26, 28--30 \\
 & & & 2--5, 7, 8, 10--23, 25--30  \\ \hline
9 & 7 & & 2, 6, 10--15, 17--23, 26 \\
 & & & 2, 6, 8, 10--24, 26--30  \\ \hline
9 & 8 & & 2, 3, 5--7, 10--15, 17--20, 22, 23, 25, 28--30 \\
 & & & 2--7, 10--13, 15, 17--25, 27--30  \\ \hline
\end{supertabular}
\end{center}

\bigskip
\begin{center}
\tablefirsthead{%
      \multicolumn{4}{c}{TABLE 9} \\ 
      \multicolumn{4}{c}{Values of $q$-power free constants $2\leq k\leq30$ for which the equation} \\ 
      \multicolumn{4}{c}{$G_n=kx^q$ has no solutions with $A=2$, $B=1$, and $q=3$, $5$} \\ 
      \multicolumn{4}{c}{} \\ 
      \hline
       $G_0$ & $G_1$ & $q=3$ &  \\ 
                   &               & $q=5$ &  \\ \hline\hline}
\tablehead{%
\hline
\multicolumn{4}{|l|}{Table 9: Impossible values of $k$ in sequences with $A=2$, $B=1$} \\ 
\multicolumn{4}{|l|}{\small\sl (continued from previous page)}\\
      \hline
      $G_0$ & $G_1$ & $q=3$ &  \\ 
                   &               & $q=5$ &  \\ \hline\hline}
\tabletail{%
      \hline
      \multicolumn{4}{|r|}{\small\sl continued on next page} \\
      \hline}
\tablelasttail{%
      \hline}
%\bottomcaption{$p$-powers in sequences with $A=2$ and $B=1$}
\begin{supertabular}{| c | c | c l |}
1 & 1 & & 2, 5, 6, 9--15, 18--23, 25, 26, 28--30\\
& & & 2, 4--6, 8--16, 18--30 \\ \hline
1 & 3 & & 2, 5, 6, 9--15, 18--23, 25, 26, 28-30 \\
& & & 2, 4--6, 8--16, 18--30 \\ \hline
1 & 4 & & 5--7, 10--15, 17, 18, 20, 21, 23, 25, 26, 28--30 \\
 & & & 5--7, 10--18, 20, 21, 23--30 \\ \hline
1 & 5 & & 2, 6, 7, 9, 10, 12, 14, 15, 17--23, 25, 26, 28--30 \\
 & & & 2, 4, 6--10, 12, 14--26, 28--30 \\ \hline
1 & 6 & & 2, 3, 5, 9--12, 14, 15, 17, 19--23, 25, 26, 28--30 \\
 & & & 2, 3, 5, 8--12, 14--17, 19--25, 27--30 \\ \hline
1 & 7 & & 2, 3, 6, 10--14, 17--22, 25, 26, 28--30 \\
 & & & 2--4, 6, 8, 10--14, 16--22, 24--30 \\ \hline
1 & 8 & & 2, 3, 5, 7, 9, 10, 12-15, 18-- 22, 23, 25, 26, 29, 30 \\
 & & & 2--5, 7, 9, 10, 12--16, 18--27, 29, 30 \\ \hline
1 & 9 & & 2, 3, 5, 6, 10--12, 14, 15, 17, 18, 20--23, 25, 26, 28--30 \\
 & & & 2--6, 8, 10--12, 14--18, 20--30 \\ \hline
2 & 3 & & 5--7, 10--15, 17, 18, 20, 21, 23, 25, 26, 28--30 \\
 & & & 5--7, 10--18, 20, 21, 23--30 \\ \hline
2 & 7 & & 5, 6, 9, 10, 12--15, 17, 18, 20--23, 25, 28--30 \\
 & & & 5, 6, 8--10, 12--15, 17--25, 27--30 \\ \hline
2 & 8 & & 3, 5, 7, 9--15, 17, 19, 20--23, 25, 26, 28--30 \\
 & & & 3, 5, 7, 9--15, 17, 19--30 \\ \hline
2 & 9 & & 3, 6, 7, 10--15, 17--19, 22, 23, 25, 26, 28--30 \\
 & & & 3, 4, 6, 7, 10--19, 22--30 \\ \hline
3 & 3 & & 2, 5--7, 10, 12--15, 17--20, 22, 23, 25, 26, 28--30 \\
 & & & 2, 4--8, 10--20, 22--30 \\ \hline
3 & 4 & & 5, 6, 9, 10, 12--15, 17, 18, 20--23, 25, 28--30 \\
 & & & 5, 6, 8--10, 12--15, 17--25, 27--30 \\ \hline
3 & 5 & & 2, 6, 7, 9, 10, 12, 14, 15, 17--23, 25, 26, 28--30 \\
 & & & 2, 4, 6--10, 12, 14--26, 28--30 \\ \hline
3 & 9 & & 2, 5--7, 10, 12--15, 17--20, 22, 23, 25, 26, 28--30 \\
& & & 2, 4--8, 10--20, 22--30  \\ \hline
4 & 3 & & 2, 6, 11--13, 15, 17--22, 25, 26, 28--30 \\
 & & & 2, 6--9, 11--13, 15--22, 24--30 \\ \hline
4 & 4 & & 2, 3, 5--7, 9--11, 13--15, 17--20, 21--23, 25, 26, 29, 30 \\
 & & & 2, 3, 5--11, 13--27, 29, 30 \\ \hline
4 & 5 & & 2, 6, 11--13, 15, 17--22, 25, 26, 28--30 \\
 & & & 2, 6--9, 11--13, 15--22, 24--30 \\ \hline
4 & 6 & & 3, 5, 7, 9--15, 17, 19--23, 25, 26, 28--30 \\
 & & & 3, 5, 7, 9--15, 17, 19--30 \\ \hline
4 & 7 & & 2, 3, 5, 9--12, 14, 15, 17, 19--23, 25, 26, 28--30 \\
 & & & 2, 3, 5, 8--12, 14--17, 19--25, 27--30 \\ \hline
5 & 3 & & 2, 6, 9, 10, 12--15, 17, 18, 20--23, 26, 28--30 \\
 & & & 2, 4, 6, 8--10, 12--18, 20--24, 26--30 \\ \hline
5 & 4 & & 2, 3, 7, 9--12, 14, 15, 18, 19--21, 23, 25, 26, 28, 29 \\
 & & & 2, 3, 7--12, 14--16, 18--29 \\ \hline
5 & 5 & & 2, 3, 6, 7, 9--14, 17--23, 25, 26, 28--30 \\
 & & & 2--4, 6--14, 16--30 \\ \hline
5 & 6 & & 2, 3, 7, 9--12, 14, 15, 18--21, 23, 25, 26, 28, 29 \\
 & & & 2, 3, 7--12, 14--16, 18--29 \\ \hline
5 & 7 & & 2, 6, 9, 10, 12--15, 17, 18, 20--23, 26, 28--30 \\
 & & & 2, 4, 6, 8--10, 12--18, 20--24, 26--30 \\ \hline
5 & 8 & & 3, 6, 7, 10--15, 17--19, 22, 23, 25, 26, 28--30 \\ 
 & & & 3, 4, 6, 7, 10--19, 22--30 \\ \hline
5 & 9 & & 2, 3, 6, 10--14, 17--22, 25, 26, 28--30 \\
 & & & 2, 3, 4, 6, 8, 10--14, 16--19, 20--22, 24-- 30 \\ \hline
6 & 2 & & 3, 5, 7, 9, 11--15, 17--21, 23, 25, 28--30 \\
 & & & 3--5, 7--9, 11--21, 23--25, 27--30 \\ \hline
6 & 3 & & 2, 5, 7, 10, 11, 13--15, 17--23, 25, 26, 28--30 \\
 & & & 2, 4, 5, 7, 8, 10, 11, 13--23, 25, 26, 28--30 \\ \hline
6 & 4 & & 2, 3, 5, 7, 9--13, 15, 17--21, 23, 25, 26, 28--30 \\
 & & & 2, 3, 5, 7, 9--13, 15--21, 23--30 \\ \hline
6 & 5 & & 3, 9--15, 17--19, 21--23, 25, 26, 28--30 \\
 & & & 2--4, 8--15, 17--19, 21--30 \\ \hline
6 & 6 & & 2, 3, 5, 7, 9--15, 17, 19--21, 23, 25, 26, 28--30 \\
 & & & 2--5, 7--17, 19--30 \\ \hline
6 & 7 & & 3, 9--15, 17--19, 21--23, 25, 26, 28--30 \\
 & & & 2--4, 8--15, 17--19, 21--30 \\ \hline
6 & 8 & & 2, 3, 5, 7, 9--13, 15, 17--21, 23, 25, 26, 28--30 \\
 & & & 2, 3, 5, 7, 9--13, 15--21, 23--30 \\ \hline
6 & 9 & & 2, 5, 7, 10, 11, 13--15, 17--23, 25, 26, 28--30 \\
 & & & 2, 4, 5, 7, 8, 10, 11, 13--23, 25, 26, 28--30 \\ \hline
7 & 2 & & 5, 6, 9, 10, 13--15, 17--23, 25, 26, 28--30\\
 & & & 3--6, 8--10, 13--23, 25--30 \\ \hline
7 & 3 & & 2, 5, 6, 9, 10, 12, 14, 15, 17--23, 25, 26, 28, 30 \\
 & & & 2, 4, 5, 6, 8--10, 12, 14--28, 30 \\ \hline
7 & 4 & & 2, 3, 5, 6, 9, 11--14, 17--23, 26, 28--30 \\
 & & & 3, 5, 6, 8, 9, 11--14, 16--26, 28--30 \\ \hline
7 & 5 & & 2, 3, 6, 10--15, 18--23, 26, 28--30 \\
 & & & 2--4, 6, 8, 10--16, 18--24, 26--30 \\ \hline
7 & 6 & & 3, 5, 9--14, 15, 17, 18, 20--22, 25, 26, 28--30 \\
 & & & 2--5, 9--18, 20--22, 24--30 \\ \hline
7 & 7 & & 2, 3, 5, 6, 9--15, 17--20, 22, 23, 25, 26, 28--30 \\
 & & & 2--6, 8--20, 22--30 \\ \hline
7 & 8 & & 3, 5, 9--15, 17, 18, 20--22, 25, 26, 28--30 \\
 & & & 2--5, 9--18, 20--22, 24--30  \\ \hline
7 & 9 & & 2, 3, 6, 10--15, 18--23, 26, 28--30 \\
 & & & 2--4, 6, 8, 10--16, 18--24, 26--30 \\ \hline
8 & 1 & & 2, 3, 5--7, 9, 11--14, 17--20, 22, 23, 25, 26, 28--30 \\
 & & & 2, 4, 5, 6, 7, 9, 11--14, 16--20, 22--30 \\ \hline
8 & 2 & & 3, 5--7, 9--11, 13, 15, 17--23, 25, 28--30\\
 & & & 3--7, 9--11, 13, 15--25, 27--30 \\ \hline
8 & 3 & & 2, 5--7, 9--12, 15, 17--23, 25, 26, 28--30 \\ 
 & & & 2, 4--7, 9--12, 15--30 \\ \hline
8 & 4 & & 3, 5--7, 9, 10, 13--15, 17--22, 25, 26, 28--30 \\
 & & & 2, 3, 5--7, 9--11, 13--15, 17--30 \\ \hline
8 & 5 & & 2, 3, 6, 7, 9, 10, 12--15, 17, 19--23, 25, 26, 28, 29 \\
 & & & 2--4, 6, 7, 9, 10, 12--17, 19--29 \\ \hline
8 & 6 & & 2, 3, 5, 7, 9, 11--13, 15, 17, 19, 21--23, 25, 26, 29, 30\\
& & & 2, 3, 4, 7, 9, 11--19, 21--27, 29, 30 \\ \hline
8 & 7 & & 2, 3, 5, 6, 10--15, 17--21, 23, 25, 28--30 \\
 & & & 2--6, 10--21, 23--25, 27--30 \\ \hline
8 & 8 & & 2, 5, 6, 9--15, 18--23, 25, 26, 28--30 \\
 & & & 2--7, 9--23, 25--30 \\ \hline
8 & 9 & & 2, 3, 5, 6, 10--15, 17--21, 23, 25, 28--30 \\
 & & & 2--6, 10--21, 23--25, 27--30  \\ \hline
9 & 1 & & 2, 3, 5--7, 10, 12--15, 18--22, 25, 26, 28--30 \\
 & & & 2--8, 10, 12--16, 18--22, 24--30 \\ \hline
9 & 2 & & 3, 5--7, 10--12, 14, 15, 17--23, 25, 26, 29, 30 \\
 & & & 3--8, 10--12, 14, 15, 17--27, 29, 30 \\ \hline
9 & 3 & & 2, 5--7, 10--14, 17--23, 25, 26, 28--30 \\
 & & & 2, 4--8, 10--14, 16--30 \\ \hline
9 & 4 & & 2, 3, 5--7, 10, 12, 13, 15, 18--23, 25, 26, 29, 30 \\
 & & & 2, 3, 5, 6, 8, 10--13, 15, 16, 18--30  \\ \hline
9 & 5 & & 2, 3, 6, 7, 10--12, 14, 15, 17, 18, 20--23, 25, 26, 28--30\\
 & & & 2--4, 6--8, 10--12, 14--18, 20--30 \\ \hline
9 & 6 & & 2, 3, 5, 10, 11, 13--15, 17--20, 22, 23, 25, 26, 28--30 \\
 & & & 2--5, 7, 8, 10, 11, 13--20, 22--30 \\ \hline
9 & 7 & & 2, 3, 5, 6, 10, 12--15, 17--22, 25, 26, 28--30 \\
 & & & 2--6, 8, 10, 12--22, 24--30 \\ \hline
9 & 8 & & 2, 3, 5--7, 11--15, 17--23, 26, 28, 30 \\
 & & & 2--7, 11--24, 26--28, 30 \\ \hline
9 & 9 & & 2, 3, 5--7, 10--15, 17--23, 25, 26, 28--30\\
 & &  & 2--8, 10--26, 28--30 \\ \hline
\end{supertabular}
\end{center}

\bigskip
\begin{center}
\tablefirsthead{%
      \multicolumn{4}{c}{TABLE 10} \\ 
      \multicolumn{4}{c}{Values of $q$-power free constants $2\leq k\leq30$ for which the equation} \\ 
      \multicolumn{4}{c}{$G_n=kx^q$ has no solutions with $A=1$, $B=3$, and $q=3$, $5$} \\ 
      \multicolumn{4}{c}{}\\
      \hline
      $G_0$ & $G_1$ & $q=3$ & $q=5$\\ \hline\hline}
\tablehead{%
\hline
\multicolumn{4}{|l|}{Table 10: Impossible values of $k$ in sequences with $A=1$, $B=3$} \\ 
\multicolumn{4}{|l|}{\small\sl (continued from previous page)}\\
      \hline
      $G_0$ & $G_1$ & $q=3$ & $q=5$ \\ \hline\hline}
\tabletail{%
      \hline
      \multicolumn{4}{|r|}{\small\sl continued on next page} \\
      \hline}
\tablelasttail{%
      \hline}
%\bottomcaption{$p$-powers in sequences with $A=1$ and $B=3$}
\begin{supertabular}{| c | c | l | l |}
1 & 2 & 29 & 3, 4, 6--8, 10, 12--14, 16--25, 27--30  \\ \hline
1 & 3 & 22 & 2, 4, 5, 7--14, 16, 18--26, 28--30 \\ \hline
1 & 5 & 2, 6 & 2--4, 6, 7, 9--22, 24--26, 28--30 \\ \hline
1 & 6 & 29 & 2--5, 7, 8, 10--26, 28--30 \\ \hline
 1 & 7 & 30 & 3--6, 8, 9, 11--30 \\ \hline
1 & 8 & 29, 30 & 2--7, 9, 10, 12--27, 29, 30 \\ \hline
1 & 9 & 11, 20, 30 & 2--5, 7, 8, 10, 11, 13--30 \\ \hline
 2 & 3 & 25 & 4--8, 10--17, 19--21, 22, 24--30 \\ \hline
 2 & 4 & 25 & 3, 5--9, 11--17, 19--21, 23--29 \\ \hline
2 & 6 & 13, 29 & 3--5, 7--11, 13--29 \\ \hline
2 & 7 & 6, 10 & 3--6, 8--12, 14--26, 28--30 \\ \hline
2 & 9 & 5, 13 & 3--8, 10--14, 16--26, 28--30 \\ \hline
3 & 1 & 4, 9, 15, 22, 23 & 2, 4--9, 11, 12, 14--30 \\ \hline
3 & 2 & 5, 22, 25, 26 & 4--10, 12--16, 18--30 \\ \hline
3 & 4 &  10, 19, 26 & 5--12, 14--24, 26--30 \\ \hline
3 & 5 & 12, 19, 26 & 2, 4, 6--8, 10--13, 15--24, 26--28, 30 \\ \hline
3 & 7 & & 4--6, 8--12, 14, 15, 17--23, 25--30 \\ \hline
3 & 8 & 2, 5, 15, 22, 30& 2, 4--7, 9--16, 18--30 \\ \hline
4 & 1 &  6, 12, 18 & 2, 3, 5--12, 14, 15, 17--30 \\ \hline 
4 & 2 & 23 & 3, 5--13, 15--19, 21--30 \\ \hline
4 & 3 & 5, 6, 11, 18 & 2, 5--14, 16--23, 25--30 \\ \hline
4 & 5 & 6, 10, 12, 14, 23 & 2, 3, 6--16, 18--30 \\ \hline
4 & 6 & & 2, 3, 5, 7--17, 19--30 \\ \hline
4 & 8 & 25 & 2, 3, 5--7, 9, 10--19, 21--30 \\ \hline
4 & 9 & 13 & 2, 3, 5, 7, 8, 10, 12--20, 22--30 \\ \hline
5 & 1 & 3, 30 & 2--4, 6--15, 17, 18, 20--30 \\ \hline
5 & 2 & 3, 29 & 3, 4, 6--16, 18--22, 24--30 \\ \hline
5 & 3 & 22 & 2, 4, 6--17, 19--26, 28--30 \\ \hline
5 & 4 & 3, 10, 25 & 2, 3, 6--17, 18, 20--30 \\ \hline
5 & 6 & 4, 18, 30 & 2--4, 7--20, 22--30 \\ \hline
5 & 7 & 11, 26 & 2--4, 6, 8--21, 23--30 \\ \hline
5 & 9 & 6, 10, 11, 15, 26 & 2, 4, 6--8, 10--23, 25--28, 30 \\ \hline
6 & 1 & 3, 4, 13, 23, 25 & 2--5, 7--18, 20, 21, 23--30 \\ \hline
6 & 2 & 18, 19, 29, 30 & 3--5, 7--19, 21--25, 27--30 \\ \hline
6 & 3 & 5, 10, 23 & 2, 4, 5, 7--20, 22--29 \\ \hline
6 & 4 &  2, 10, 17, 19 & 2, 3, 5, 7--21, 23--30 \\ \hline
6 & 5 & 12, 25, 26 & 2--4, 7--22, 24--30 \\ \hline
6 & 7 &  10, 22 & 2--5, 8--24, 26--30 \\ \hline
6 & 8 & 17, 20, 23 & 2, 3, 5, 7, 9--25, 27--30 \\ \hline
7 & 1 & 9, 18, 26 & 4--6, 8, 9--21, 23, 24, 26--30 \\ \hline
7 & 2 & 13, 30 & 3--6, 8--22, 24--28, 30 \\ \hline
7 & 3 & 5, 19 & 2, 4--6, 8--23, 25--30 \\ \hline
7 & 4 & & 2, 3, 5, 6, 8--24, 26--30 \\ \hline
7 & 5 & 13 & 2--4, 6, 8--25, 27--30 \\ \hline
7 & 6 & 4, 15, 22, 26 & 2--5, 8--26, 28--30 \\ \hline
7 & 8 & 23 & 2, 3, 5, 6, 9--28, 30 \\ \hline
7 & 9 & & 2--6, 8, 10--29 \\ \hline
8 & 1 & 4, 5, 19, 26 & 2--7, 9--24, 26, 27, 29, 30 \\ \hline
8 & 2 & 3, 12, 13 & 3--7, 9--25, 27--30 \\ \hline
8 & 3 & 13, 17 & 2, 4--6, 7, 9--17, 19--26, 28--30 \\ \hline
8 & 4 &  15 & 2, 3, 5--7, 9--27, 29, 30 \\ \hline
8 & 5 & 4, 20, 30 & 2, 4, 7, 9--28, 30 \\ \hline
8 & 6 & 10, 12, 22, 25 & 2--5, 7, 10--29 \\ \hline
8 & 7 & 3, 5, 6, 10, 15, 18, 20, 21 & 2--6, 9--22, 24--30 \\ \hline
8 & 9 & 10, 22 & 2--7, 10--30 \\ \hline
9 & 1 & 23 & 2--8, 10--27, 29, 30 \\ \hline
9 & 2 & 17, 18, 19, 20, 25 & 3--8, 10--28, 30 \\ \hline
9 & 3 & 5, 11, 12, 22 & 4--8, 10--29 \\ \hline
9 & 4 & 18, 22, 29 & 2, 3, 5--8, 10--26, 28--30 \\ \hline
9 & 5 & 2, 15, 26, 29, 30 & 2--4, 6--8, 10--30 \\ \hline
9 & 6 &  3, 5, 13, 15, 23 & 2--5, 7, 8, 10--30 \\ \hline
9 & 7 & 6 & 2--6, 8, 10--30 \\ \hline
9 & 8 &  10, 11, 22, 23, 30 &  2--7, 10--30 \\ \hline
\hline
\end{supertabular}
\end{center}

\bigskip
\begin{center}
\tablefirsthead{%
      \multicolumn{4}{c}{TABLE 11} \\
      \multicolumn{4}{c}{Values of $q$-power free constants $2\leq k\leq30$ for which the equation} \\ 
      \multicolumn{4}{c}{$G_n=kx^q$ has no solutions with $A=2$, $B=2$, and $q=3$, $5$} \\ 
     \multicolumn{4}{c}{}\\
     \hline
      $G_0$ & $G_1$ & $q=3$ &  \\
                    &              & $q=5$ & \\ \hline\hline}
\tablehead{%
\hline
\multicolumn{4}{|l|}{Table 11: Impossible values of $k$ in sequences with $A=2$, $B=2$} \\ 
\multicolumn{4}{|l|}{\small\sl continued from previous page}\\
      \hline
      $G_0$ & $G_1$ & $q=3$ &  \\
                    &              & $q=5$ & \\ \hline\hline}
\tabletail{%
      \hline
      \multicolumn{4}{|r|}{\small\sl continued on next page} \\
      \hline}
\tablelasttail{%
      \hline}
%\bottomcaption{$p$-powers in sequences with $A=1$ and $B=1$}
\begin{supertabular}{| c | c | c  l |}
1 & 1 & & 2, 3, 5--7, 9, 11--13, 15, 17, 18, 20--23, 25, 29, 30  \\
 & & & 2, 3, 5--9, 11--15, 17--27, 29, 30 \\ \hline
1 & 5 & & 2--4, 6, 7, 9--11, 13, 15, 17, 19--23, 26, 28--30 \\
 & & & 2--4, 6--11, 13--30 \\ \hline
1 & 6 & & 3, 7, 9--11, 13, 15, 17--19, 21--23, 25, 28--30  \\
 & & & 3--5, 7--13, 15--30 \\ \hline
1 & 7 & & 3--6, 9--15, 17--19, 21--23, 25, 28--30 \\
 & & & 3--6, 9--15, 17--28, 30 \\ \hline
1 & 8 & & 2, 5, 7, 9--15, 17, 19, 21--23, 25, 26, 28--30  \\
 & & & 2, 5-7, 9--11, 13--17, 19--30 \\ \hline
1 & 9 & & 2, 4--7, 10--15, 17--19, 21--23, 25, 26, 29, 30  \\
 & & & 2, 4, 5--8, 10--19, 21--30 \\ \hline
2 & 2 & & 3--6, 9, 10--15, 17, 18, 21--23, 25, 26, 29, 30  \\
 & & & 3--7, 9--12, 14--19, 21--30 \\ \hline
2 & 3 & & 5--7, 11, 13, 15, 17--19, 21--23, 25, 28--30 \\
 & & & 4--9, 11--15, 17--25, 27--30 \\ \hline
2 & 5 & & 3, 6, 9--12, 15, 17--23, 25, 26, 28--30  \\
 & & & 3, 4, 6, 7, 9, 10--13, 15, 17--19, 21--27, 29, 30 \\ \hline
2 & 7 & & 3, 5, 6, 9, 11, 13--15, 19--22, 25, 26, 28--30 \\
 & & & 3--6, 8--15, 17, 19--30 \\ \hline
2 & 8 & & 3--6, 9--15, 17, 18, 21--23, 25, 26, 29, 30  \\
 & & & 3--7, 9--12, 14--19, 21--30 \\ \hline
2 & 9 & & 3--7, 10, 11, 13--15, 17--19, 23, 25, 26, 29, 30  \\
 & & & 3--8, 10--21, 23--30 \\ \hline
3 & 2 & & 4--7, 9, 11--15, 17--22, 25, 26, 29, 30 \\
 & & & 4--9, 11--23, 25--30 \\ \hline
3 & 3 & & 2, 4--7, 9--11, 13--15, 17--23, 25, 26, 28, 29  \\
 & & & 2, 4--11, 13--29 \\ \hline
3 & 4 & & 2, 5--7, 9--13, 15, 17--19, 21--23, 25, 26, 28--30  \\
 & & & 2, 5--13, 15--30 \\ \hline
3 & 5 & & 7, 9--15, 17, 19--23, 25, 28--30 \\
 & & & 4, 7--15, 17--26, 28--30 \\ \hline
3 & 7 & & 5, 9, 11, 12, 14, 15, 17, 19, 21--23, 25, 26, 28--30 \\
 & & & 2, 4--6, 8--15, 17--19, 21--23, 25--30 \\ \hline
4 & 1 & & 2, 3, 5--7, 9, 11--15, 17--21, 23, 25, 26, 29, 30 \\
 & & & 3, 5--9, 11--30 \\ \hline
4 & 2 & & 6, 7, 11, 13--15, 17--23, 25, 26, 29, 30  \\
 & & & 6--8, 10, 11, 13--27, 29, 30 \\ \hline
4 & 3 & & 2, 5--7, 10, 11, 13, 15, 17--19, 21--23, 25, 26, 28--30   \\
 & & & 2, 5--7, 9--13, 15--30 \\ \hline
4 & 4 & & 3, 6, 9--12, 15, 17--23, 25, 26, 28--30  \\
 & & & 3, 6, 8--12, 14, 15, 17--25, 27--30 \\ \hline
4 & 5 & & 3, 7, 9--11, 13--15, 17, 19--23, 25, 26, 29, 30  \\
 & & & 3, 7--17, 19--30 \\ \hline
4 & 6 & & 7, 9--15, 17, 19, 21--23, 25, 26, 29, 30  \\
 & & & 2, 7--19, 21--30  \\ \hline
4 & 7 & & 2, 3, 5, 6, 9--15, 17--19, 21, 23, 25, 26, 28--30  \\
 & & & 2, 3, 6, 8--15, 17--21, 23--30 \\ \hline
4 & 9 & & 5--7, 11, 13--15, 17--20, 22, 23, 25, 28, 30  \\
 & & & 3, 5, 7, 8, 10--15, 17--25, 27--30 \\ \hline
5 & 1 & & 2--4, 6, 9--11, 13--15, 17--23, 25, 28--30  \\
 & & & 2--4, 6, 8--11, 13--25, 27--30 \\ \hline
5 & 2 & & 3, 6, 7, 9--13, 15, 17--23, 25, 26, 28, 30 \\
 & & & 3, 6--13, 15, 17--30 \\ \hline
5 & 3 & & 7, 9--15, 17--19, 21--23, 25, 26, 29, 30 \\
 & & & 2, 4, 7--15, 17--19, 21--24, 26--30 \\ \hline
5 & 4 & & 2, 6, 9--15, 17, 19--23, 25, 26, 28--30  \\
 & & & 2, 6, 8--17, 19--30  \\ \hline
5 & 5 & & 2--4, 6, 7, 9--15, 17--19, 21--23, 25, 26, 28--30  \\
 & & & 2--4, 6--19, 21-30 \\ \hline
5 & 6 & & 3, 4, 9--15, 17--21, 23, 25, 26, 28--30  \\
 & & & 3, 4, 7, 8, 10--21, 23--26, 27--30 \\ \hline
5 & 7 & & 6, 9--11, 13--15, 17--19, 21--23, 25, 26, 28--30 \\
 & & & 2, 3, 6, 8--23, 25--30 \\ \hline
5 & 8 & & 2, 3, 6, 7, 9--15, 17--23, 25, 29, 30  \\
 & & & 2, 3, 6, 7, 9--15, 17--25, 27--30 \\ \hline
5 & 9 & & 2, 6, 7, 10--15, 17--23, 25, 29, 30  \\
 & & & 2, 4, 6--8, 10--15, 17--27, 29, 30 \\ \hline
6 & 1 & & 2, 4, 5, 7, 9, 10, 12, 13, 15, 17--23, 25, 26, 28, 29  \\
 & & & 2--5, 7--13, 15--29 \\ \hline
6 & 2 & & 3, 4, 7, 9--11, 14, 15, 17--23, 25, 26, 28--30  \\
 & & & 3, 4, 7, 9--15, 17--23, 25, 26, 28--30 \\ \hline
6 & 3 & & 2, 5, 7, 9--11, 13, 14, 17, 19--23, 25, 26, 28--30  \\
 & & & 2, 4, 5, 7--17, 19--30 \\ \hline
6 & 4 & & 2, 3, 5, 10--15, 18, 19, 21--23, 25, 26, 28--30 \\
 & & & 2, 3, 5, 8, 10--16, 18, 19, 21--30 \\ \hline
6 & 5 & & 3, 4, 7, 9--15, 17, 18, 20, 21, 23, 25, 26, 29, 30  \\
 & & & 2--4, 7--21, 23--30 \\ \hline
6 & 6 & & 2, 4, 5, 7, 9--15, 17--20, 22, 23, 25, 26, 28--30  \\
 & & & 2, 4, 5, 7--23, 25--30 \\ \hline
6 & 7 & & 3, 5, 9--15, 17--19, 21, 22, 25, 28--30  \\
 & & & 2--5, 8--25, 27--30 \\ \hline
6 & 8 & & 3, 4, 7, 10--15, 17--23, 26, 29, 30 \\
 & & & 3, 4, 7, 9--16, 18--27, 29, 30 \\ \hline
6 & 9 & & 2, 4, 5, 7, 10, 11, 13--15, 17--23, 25, 26, 28, 29  \\
 & & & 2--5, 7, 8, 10--29 \\ \hline
7 & 1 & & 3--6, 9, 11--15, 17--23, 25, 26, 28--30  \\
 & & & 2--6, 8, 9, 11--15, 17--22, 24--30 \\ \hline
7 & 2 & & 3, 4, 9--12, 14, 15, 17, 19--23, 25, 26, 28--30  \\
 & & & 3--5, 8--17, 19--30 \\ \hline
7 & 3 & & 2, 4--6, 10--15, 17--19, 21--23, 25, 26, 28--30  \\
 & & & 2, 4, 5, 6, 8, 10--19, 21--30 \\ \hline
7 & 4 & & 2, 3, 6, 9, 10, 12--15, 17--21, 23, 25, 26, 28--30 \\
 & & & 2, 3, 6, 8--10, 12--21, 23--30 \\ \hline
7 & 5 & & 2, 4, 6, 9--15, 17, 18, 20--23, 25, 26, 28--30  \\
 & & & 2--4, 6, 9--23, 25--30 \\ \hline
7 & 6 & & 2, 3, 5, 9--15, 17, 19--23, 25, 28--30  \\
 & & & 3, 5, 8--17, 19--25, 27--30 \\ \hline
7 & 7 & & 2--6, 9--15, 17--23, 25, 26, 29, 30 \\
 & & & 2--6, 8--27, 29, 30 \\ \hline
7 & 8 & & 2, 4--6, 10--15, 17--23, 25, 26, 28, 29  \\
 & & & 2, 4--6, 9--17, 19--29 \\ \hline
7 & 9 & & 2, 3, 5, 10--15, 17, 18, 21--23, 25, 26, 28--30  \\
 & & & 2--5, 8, 10--18, 20-30 \\ \hline
8 & 1 & & 2--7, 9--11, 13, 15, 17, 19, 20, 22, 23, 25, 26, 28--30 \\
 & & & 2--5, 7, 9--17, 19, 20, 22--30 \\ \hline
8 & 2 & & 3--6, 9, 10, 12--15, 17--19, 21--23, 25, 26, 28--30 \\
 & & & 3, 5, 6, 9, 10, 12--19, 21--30 \\ \hline
8 & 3 & & 2, 4--7, 9--15, 17, 20, 21, 23, 25, 26, 28--30  \\
 & & & 2, 4--7, 9--18, 20, 21, 23--30 \\ \hline
8 & 4 & & 5, 9, 11, 12, 14, 15, 17, 19, 21--23, 25, 26, 28--30  \\
 & & & 2, 3, 7, 9, 11, 12, 14--17, 19--23, 25--30 \\ \hline
8 & 5 & & 2--4, 6, 7, 9--15, 18, 20, 21, 23, 25, 28--30  \\
 & & & 2--4, 6, 7, 9--16, 18--25, 27--30 \\ \hline
8 & 6 & & 4, 7, 10--15, 17, 19--23, 25, 26, 29, 30  \\
 & & & 2--4, 7, 10--15, 17--27, 29, 30 \\ \hline
8 & 7 & & 2--6, 9--14, 17--23, 25, 28, 29  \\
 & & & 2--6, 9--14, 16--29 \\ \hline
8 & 8 & & 2, 3, 5--7, 9, 11--13, 15, 17, 18, 20--23, 25, 29, 30 \\
 & & & 2, 3, 5, 6, 9, 11--13, 15--18, 20--25, 27--30 \\ \hline
8 & 9 & & 2--7, 10--12, 14, 15, 17--23, 25, 29 \\
 & & & 2--7, 10--12, 14--30 \\ \hline
9 & 1 & & 2--7, 10--12, 14, 15, 17--19, 21--23, 25, 26, 28--30  \\
 & & & 2--8, 10--12, 14--19, 21--30 \\ \hline
9 & 2 & & 3, 5, 7, 10--15, 17--21, 23, 25, 26, 28--30  \\
 & & & 3--7, 10--21, 23--30 \\ \hline
9 & 3 & & 4--7, 10, 11, 13--15, 17, 19--23, 25, 26, 28--30  \\
 & & & 2, 4--8, 10, 11, 13--23, 25--30 \\ \hline
9 & 4 & & 2, 3, 5, 6, 10--14, 17--23, 25, 28--30  \\
 & & & 2, 3, 5, 6, 8, 10--14, 16--25, 27--30 \\ \hline
9 & 5 & & 2--4, 6, 7, 10, 12--15, 17--23, 25, 26, 29, 30  \\
 & & & 2--4, 6--8, 10, 12--27, 29, 30 \\ \hline
9 & 6 & & 2, 3, 5, 10--15, 17--23, 25, 26, 28, 29  \\
 & & & 2--5, 7, 8, 10--29 \\ \hline
9 & 7 & & 2, 3, 5, 6, 11--15, 17--23, 25, 26, 28--30 \\
 & & & 2--6, 8, 11--30 \\ \hline
9 & 8 & & 2--4, 6, 7, 11, 13--15, 17--23, 25, 26, 28--30  \\
 & & & 2--4, 6, 7, 10, 11, 13--19, 21--30 \\ \hline
9 & 9 & & 2--7, 10--15, 17--23, 25, 26, 28--30 \\
 & & & 2--8, 10--30 \\ \hline
\end{supertabular}
\end{center}

\bigskip
\begin{center}
\tablefirsthead{%
      \multicolumn{4}{c}{TABLE 12}\\
      \multicolumn{4}{c}{Values of $q$-power free constants $2\leq k\leq30$ for which the equation} \\ 
      \multicolumn{4}{c}{$G_n=kx^q$ has no solutions with $A=3$, $B=1$, and $q=3$, $5$} \\ 
      \multicolumn{4}{c}{}\\
      \hline
      $G_0$ & $G_1$ & $q=3$ & \\ 
                    &              & $q=5$ &  \\ \hline\hline}
\tablehead{%
      \hline
\multicolumn{4}{|l|}{Table 12: Impossible values of $k$ in sequences with $A=3$, $B=1$}\\
\multicolumn{4}{|l|}{\small\sl continued from previous page}\\
      \hline
      $G_0$ & $G_1$ & $q=3$ & \\ 
                    &              & $q=5$ &  \\ \hline\hline}
\tabletail{%
      \hline
      \multicolumn{4}{|r|}{\small\sl continued on next page} \\
      \hline}
\tablelasttail{%
      \hline}
%\bottomcaption{$p$-powers in sequences with $A=3$ and $B=1$}
\begin{supertabular}{| c | c | c l |}
1 & 1 & & 3, 5, 6, 9--12, 14, 15, 17--22, 25, 26, 28--30  \\
 & & & 3, 5, 6, 8--12, 14--22, 24--30 \\ \hline
1 & 2 & & 3, 5, 6, 9--12, 14, 15, 17--22, 25, 26, 28--30 \\
 & & & 3, 5, 6, 8--12, 14--22, 24--30 \\ \hline
1 & 5 & & 3, 4, 6, 9--15, 18--23, 25, 26, 28--30  \\
 & & & 3, 4, 6--15, 18--30 \\ \hline
1 & 6 & & 2, 4, 5, 7, 9--15, 17, 18, 20--23, 25, 28--30 \\
 & & & 2, 4, 5, 7, 9--18, 20--26, 28---30 \\ \hline
1 & 7 & & 2, 3, 5, 6, 9, 10, 12--15, 17--21, 23, 26, 28--30  \\
 & & & 2, 3, 5, 6, 8--10, 12--21, 23--30 \\ \hline
1 & 8 & & 2, 3, 4, 6, 7, 9--13, 15, 17--23, 26, 28--30  \\
 & & & 2--4, 6, 7, 9--13, 15, 17--24, 26--30 \\ \hline
1 & 9 & & 2--5, 7, 10--15, 18--22, 25, 26, 29, 30 \\
 & & & 2--5, 7, 8, 10--16, 18--27, 29, 30 \\ \hline
2 & 2 & & 3, 5, 6, 7, 9--13, 15, 17, 18, 20--23, 25, 28--30  \\
 & & & 3, 5--7, 9--13, 15--25, 27--30 \\ \hline
2 & 3 & & 4--7, 9, 10, 12--15, 17--23, 25, 26, 28--30  \\
 & & & 4--9, 10, 12--30 \\ \hline
2 & 4 & & 3, 5--7, 9---13, 15, 17, 18, 20--23, 25, 28--30 \\
 & & & 3, 5--7, 9--13, 15--25, 27--30 \\ \hline
2 & 5 & & 3, 4, 6, 9--15, 18--23, 25, 26, 28--30  \\
 & & & 3, 4, 6--15, 18--30 \\ \hline
2 & 9 & & 4--6, 10, 11, 13--15, 17--23, 25, 26, 28, 30  \\
 & & & 3, 4, 6--9, 10--15, 18--30 \\ \hline
3 & 3 & & 2, 4, 5, 7, 9--11, 13--15, 17--20, 22, 23, 25, 26, 28--30  \\
 & & & 2, 4, 5, 7--11, 13--20, 22--30 \\ \hline
3 & 4 & & 2, 7, 9--14, 17, 19--23, 25, 26, 28--30 \\
 & & & 2, 6--14, 16, 17, 19--30 \\ \hline
3 & 5 & & 2, 7, 9--14, 17, 19--23, 25, 26, 28--30  \\
 & & & 2, 6--14, 16, 17, 19--30 \\ \hline
3 & 6 & & 2, 4, 5, 7, 9--11, 13--15, 17--20, 22, 23, 25, 26, 28--30  \\
 & & & 2, 4, 5, 7--11, 13--20, 22--30 \\ \hline
3 & 7 & & 4--6, 10, 11, 13--15, 17--23, 25, 26, 28, 30 \\
 & & & 4--6, 8, 10--23, 25--28, 30 \\ \hline
3 & 8 & & 2, 4, 5, 7, 9--15, 17, 18, 20--23, 25, 28--30  \\
 & & & 2, 4, 5, 7, 9--18, 20--26, 28--30 \\ \hline
4 & 2 & & 3, 5--7, 9, 11--13, 15, 17--23, 25, 26, 28--30 \\
 & & & 3, 5--9, 11--30 \\ \hline
4 & 3 & & 2, 5--7, 10--12, 14, 15, 18--23, 25, 26, 28--30  \\
 & & & 2, 5--8, 10--12, 14--30 \\ \hline
4 & 4 & & 3, 5--7, 9--15, 17--23, 25, 26, 29, 30  \\
 & & & 2, 3, 5--7, 9--15, 17--27, 29, 30 \\ \hline
4 & 5 & & 2, 3, 6, 9--15, 17, 18, 20--23, 26, 28--30 \\
 & & & 2, 3, 6, 8--18, 20--24, 26--30 \\ \hline
4 & 6 & & 2, 3, 5, 7, 10--15, 17--21, 23, 25, 26, 28--30  \\
 & & & 2, 3, 5, 7--21, 23--30 \\ \hline
4 & 7 & & 2, 3, 6, 9--15, 17, 18, 20--23, 26, 28--30 \\
 & & & 2, 3, 6, 8--18, 20--24, 26--30 \\ \hline
4 & 8 & & 3, 5--7, 9--15, 17--23, 25, 26, 29, 30  \\
 & & & 2, 3, 5--7, 9--15, 17--27, 29, 30 \\ \hline
4 & 9 & & 2, 5--7, 10--12, 14, 15, 18--23, 25, 26, 28--30 \\
 & & & 2, 5--8, 10--12, 14--30 \\ \hline
5 & 2 & & 3, 4, 6, 7, 9, 10, 12, 14, 15, 17--23, 25, 26, 28--30  \\
 & & & 3, 4, 6--10, 12, 14--30 \\ \hline
5 & 3 & & 2, 4, 6, 7, 9--11, 15, 17--23, 25, 26, 28--30  \\
 & & & 2, 4, 6--11, 13, 15--30 \\ \hline
5 & 4 & & 2, 3, 6, 7, 9, 10, 12--15, 18--23, 25, 26, 28--30 \\
 & & & 2, 3, 6--10, 12--16, 18--30 \\ \hline
5 & 5 & & 2--4, 6, 7, 9, 11--15, 17--19, 21--23, 25, 26, 28--30 \\
 & & & 2--4, 6--9, 11--19, 21--30 \\ \hline
5 & 6 & & 2, 3, 7, 10--15, 17--22, 25, 26, 28--30 \\
 & & & 2--4, 7, 8, 10--22, 24--30 \\ \hline
5 & 7 & & 2--4, 6, 9--15, 17--23, 25, 28, 30  \\
 & & & 2--4, 6, 9--25, 27, 28, 30 \\ \hline
5 & 8 & & 2--4, 6, 9--15, 17--23, 25, 28, 30  \\
 & & & 2, 3, 4, 6, 9--25, 27, 28, 30 \\ \hline
5 & 9 & & 2, 3, 7, 10--15, 17--22, 25, 26, 28--30 \\
 & & &  2--4, 7, 8, 10--22, 24--30 \\ \hline
6 & 2 & & 3--5, 7, 9--11, 13--15, 17--23, 25, 26, 28--30 \\
 & & & 3--5, 7--11, 14, 15, 17--30 \\ \hline
6 & 3 & & 2, 4, 5, 7, 9--14, 17--20, 22, 23, 25, 26, 28--30 \\
 & & & 2, 4, 5, 7--14, 16--30 \\ \hline
6 & 4 & & 2, 5, 7, 9--13, 15, 17, 19--23, 25, 26, 28--30  \\
 & & & 2, 3, 5, 7--13, 15--17, 19--30 \\ \hline
6 & 5 & & 2--4, 7, 9--12, 14, 15, 17--20, 22, 23, 25, 26, 28--30  \\
 & & & 2--4, 7--12, 14--20, 22--30 \\ \hline
6 & 6 & & 2, 4, 5, 7, 9--11, 13--15, 17--23, 25, 26, 28--30  \\
 & & & 2--5, 7--11, 13--23, 25--30 \\ \hline
6 & 7 & & 3--5, 9, 10, 12--15, 17--23, 25, 26, 28--30  \\
 & & & 2, 3, 5, 8,--10, 12--26, 28--30 \\ \hline
6 & 8 & & 2--5, 7, 9, 11, 13--15, 17--23, 25, 26, 28, 29  \\
 & & & 2--5, 7, 9, 11--29 \\ \hline
6 & 9 & & 2, 3, 5, 7, 10--15, 17--23, 25, 26, 28--30 \\
 & & & 2--5, 7, 8, 10--30 \\ \hline
7 & 1 & & 2--6, 9, 11--15, 17--19, 21--23, 25, 26, 28--30  \\
 & & & 2--6, 8, 9, 11--19, 21--30 \\ \hline
7 & 2 & & 3--6, 9--12, 14, 15, 18, 20--23, 25, 26, 28--30  \\
 & & & 3--6, 8, 9--12, 14--18, 20--30 \\ \hline
7 & 3 & & 4--6, 9--15, 17, 19--23, 25, 26, 28--30  \\
 & & & 2, 4--6, 8--15, 17, 19--30  \\ \hline
7 & 4 & & 2, 3, 5, 6, 9--15, 18, 20, 21, 23, 25, 26, 28--30 \\
 & & & 2, 3, 5, 6, 8--16, 18, 20--30 \\ \hline
7 & 5 & & 3, 4, 6, 9--15, 17--21, 23, 25, 26, 28--30  \\
 & & & 2--4, 6, 8--15, 17--21, 23--30 \\ \hline
7 & 6 & &  2, 4, 5, 9--14, 17--23, 26, 28--30 \\
 & & & 2, 3, 5, 8--14, 16--24, 26--30 \\ \hline
7 & 7 & & 2--6, 9--13, 15, 17--23, 25, 26, 29, 30  \\
 & & & 2--6, 8--13, 15--27, 29, 30 \\ \hline
7 & 8 & & 2--6, 9--12, 14, 15, 17--23, 25, 26, 28--30  \\
 & & & 2--6, 9--12, 14--30 \\ \hline
7 & 9 & & 2--6, 10, 11, 13--15, 17--23, 25, 26, 28--30  \\
 & & & 2--6, 8, 10, 11, 13--30 \\ \hline
8 & 1 & & 2--7, 9, 10, 12--15, 17--22, 25, 26, 28--30 \\
 & & & 2--7, 9, 10, 12--22, 24--30 \\ \hline
8 & 2 & & 3--7, 9--13, 15, 17--21, 23, 25, 26, 28--30  \\
 & & & 3--7, 9--13, 15--21, 23--30 \\ \hline
8 & 3 & & 4--7, 9--15, 18--20, 22, 23, 25, 26, 28--30  \\
 & & & 2, 4--6, 7, 9--16, 18--20, 22--30 \\ \hline
8 & 4 & & 2, 3, 5--7, 9, 10--15, 17--19, 21--23, 25, 26, 29, 30  \\
 & & & 3, 5, 6, 9--19, 21--30 \\ \hline
8 & 5 & & 2--4, 6, 7, 9--13, 15, 17, 18, 20--22, 25, 26, 29, 30  \\
 & & & 2--4, 6, 7, 9--18, 20--22, 24--30 \\ \hline
8 & 6 & & 2--5, 7, 9--15, 17, 19--23, 25, 28--30  \\
 & & & 2--5, 7, 9--17, 19--25, 27--30 \\ \hline
8 & 7 & & 2--6, 9--15, 18--23, 25, 26, 28, 30  \\
 & & & 2--6, 9--16, 18--28, 30 \\ \hline
8 & 8 & & 3, 5, 6, 9--12, 14, 15, 17--22, 25, 26, 28--30  \\
 & & & 2--7, 9--15, 17, 18, 20--30 \\ \hline
8 & 9 & & 2--7, 10--14, 17--23, 25, 26, 28--30 \\
 & & & 2--7, 10--14, 16--30 \\ \hline
9 & 1 & & 2--7, 10, 11, 13--15, 17--23, 25, 28--30  \\
 & & & 2--8, 10, 11, 13--25, 27--30 \\ \hline
9 & 2 & & 3--7, 10--14, 17--23, 26, 28--30  \\
 & & & 3--6, 8, 10--14, 16--24, 26--30 \\ \hline
9 & 3 & & 2, 4--6, 10--15, 17, 19--23, 25, 26, 28--30 \\
 & & & 2, 4--8, 10--17, 19--23, 25--30 \\ \hline
9 & 4 & & 2, 3, 5--7, 10--15, 17--20, 22, 25, 26, 28--30  \\
 & & & 2, 3, 5--8, 10--20, 22, 24--30 \\ \hline
9 & 5 & & 2, 6, 10--15, 17--21, 23, 25, 26, 28--30 \\
 & & & 2--4, 6--8, 10--21, 23, 25--30 \\ \hline
9 & 6 & & 2--5, 7, 10--15, 17--20, 22, 23, 25, 26, 28, 30 \\
 & & & 2--5, 7, 8, 10--20, 22--26, 28--30 \\ \hline
9 & 7 & & 2--5, 10--15, 17--19, 21--23, 25, 26, 28, 29  \\
 & & & 2--6, 8, 10--19, 21--29 \\ \hline
9 & 8 & & 2--7, 10--15, 17, 18, 20--23, 25, 26, 28--30 \\
 & & & 2--7, 10--18, 20--30 \\ \hline
9 & 9 & & 2--7, 10--15, 17, 19--23, 25, 26, 28--30  \\
 & & & 2--8, 10--17, 19--30 \\ \hline
\end{supertabular}
\end{center}

%%% BIBLIOGRAPHY

\bigskip

\bigskip

\begin{flushleft}

% Write more than one author separately if they have different 
% affiliations, otherwise write the names on the same line, separeted 
% by commas.
%
Teresa BOGGIO\\ 
Andrea~MORI\\
Dipartimento di Matematica\\
Universit\`a degli Studi di Torino\\
via Carlo Alberto, 10\\
I-10123 Torino, ITALY\\
e-mail: \texttt{teresa.boggio@gmail.com}\\
e-mail: \texttt{andrea.mori@unito.it}\\[2ex]

\end{flushleft}

\end{document}